\newcommand{\eps}{\varepsilon}
\newcommand{\s}{\mathbb{S}}
\newcommand{\SG}{\s}
\newcommand{\set}{\mathbf{Set_{3}}}
\newcommand{\ms}{\mathbf{Met_3}^{Sh}}
\newcommand{\mc}{\mathbf{Met_3}^{C}}
\newcommand{\ml}{\mathbf{Met_3}^{L}}
\newcommand{\nn}{\mathbb{N}}
\newcommand{\mtimes}{M\otimes -}
\newcommand{\rem}[1]{\relax}
\renewcommand{\o}{\circ}
\newcommand{\CC}{\mathcal{C}}
\newcommand{\DD}{\mathcal{D}}
\newcommand{\BiP}{\mathbf{Set}_2}
\newcommand{\BiMS}{\mathbf{Met}_2}
\newcommand{\Met}{\mathbf{Met}}
\newtheorem{theorem}{Theorem}
\newtheorem{lemma}[theorem]{Lemma}
\newtheorem{proposition}[theorem]{Proposition}
\newtheorem{corollary}[theorem]{Corollary}
\theoremstyle{definition}
\newtheorem{definition}[theorem]{Definition}
\newtheorem{example}[theorem]{Example}
\newtheorem{remark}[theorem]{Remark}
\numberwithin{theorem}{section}
\title{Presenting the Sierpinski Gasket in Various Categories of Metric Spaces}
\author{\parbox{\linewidth}{\centering Jayampathy Ratnayake, Annanthakrishna Manokaran, Romaine Jayewardene, Victoria Noquez, and Lawrence S.~Moss\footnote{Supported by grant \#586136 from the Simons Foundation.}}}
\begin{document}

\maketitle

\begin{abstract}
This paper studies presentations of 
the Sierpinski gasket as a final coalgebra
for functors on several categories of metric spaces
with additional designated points.
The three categories which we study differ on their morphisms:
one uses short (non-expanding) maps,
a second uses Lipschitz maps,
and a third uses continuous maps.
The functor in all cases is very similar to what we find
in the standard presentation of the gasket as an attractor.
We prove that the Sierpinski gasket itself is the
final coalgebra of a naturally-occurring functor
in the continuous setting.
In the short setting, the final coalgebra exists but 
it is better described as the completion of the initial algebra,
and this is not isomorphic to the 
Sierpinski gasket.
In the Lipschitz setting, the final coalgebra does not exist.
We determine the initial algebras in all three settings as well.
\end{abstract}

\section{Introduction}

Important mathematical objects ought to have presentations which make use of universal properties.
This idea comes through most clearly for a very prosaic structure, $(N,0,s)$, where $N$ is the natural numbers,
and $s\colon N\to N$ is the successor function.   This structure may be characterized in terms of the class $\DD$ of
triples $(X,x,f)$ first considered by Dedekind.
Elements of $\DD$ consist of a set $X$, an element $x\in X$ (so that $X$ must be non-empty), and 
a function $f\colon X\to X$.   For the 
characterization, we consider morphisms between elements of $\DD$.  
From $(X,x,f)$ to $(Y,y,g)$, an appropriate morphism would be a map $\phi\colon X\to Y$ such that $\phi(x) = y$,
and $\phi(f(z)) = g(\phi(z))$ for all $z\in X$.
Then the characterization of $(N,0,s)$ is that it is \emph{initial} in $\DD$: 
for all $(X,x,f)$, there is a unique morphism $\phi\colon (N,0,s) \to (X,x,f)$.  Moreover, up to isomorphism, $(N,0,s)$ is the unique
element of $\DD$ with this property.    This existence/uniqueness assertion underlines the principle of 
definition by recursion on the natural numbers.   It is also connected to the principle of induction on $N$
and hence to the foundations of mathematics as a whole.

Given the importance of real numbers, it is surprising that a parallel characterization of $\mathbb{R}$ or subsets of it
was slow in coming.   There were early characterizations of $\mathbb{R}$ as a complete ordered field, the earliest of 
which might be \cite{Huntington03}.   What we have in mind in this paper is a characterization less ``internal'' and more 
``external'', a characterization using mappings between objects in some larger class.  This is what 
we find with objects presented via universal properties in category theory.  It is also what we saw above with  $(N,0,s)$.
To our knowledge, the first result of the kind we are after is due to~\cite{freyd:08}.
It isolates the unit interval $[0,1]$ with its two distinguished points $0$ and $1$ as the \emph{final coalgebra} of a
certain functor on a certain category.   We shall review the appropriate definitions and results in Section~\ref{section-preliminaries} below.
We do want to mention that in results of this type it makes a lot of difference what additional structure one wants to characterize.
For example, we are not aware of characterizations of $([0,1],0,1, \times)$, where $\times$ is the usual multiplication operation.
 
Freyd's paper led in several directions.   Freyd himself went on to present an algebraic approach to analysis in~\cite{freyd:08}.
Leinster generalized  Freyd's work to provide a general theory of self-similarity, aiming at the a deeper understanding of 
fractal subsets of real or complex spaces.  The work in this paper belongs to a set of papers~\cite{HasuoJacobsNiqui10,mrrTACL,Bhat,NoquezMoss} that 
add metric information to Freyd's characterization and go on to consider \emph{metric characterizations} of other fractals, and also are influenced by Leinster's work.
When one considers categories of metric spaces, it is important to settle on the choice of morphisms.

\begin{figure}[t]
%\begin{table}
\[
\begin{array}{|l||l|l|}
\hline
\mbox{category} &  \mbox{initial algebra} &\mbox{final coalgebra}\\
\hline\hline
\set \phantom{X^X}  & (G,g) & (S,s)\mbox{, also $(\SG,\sigma) =$  the }\\
    \phantom{XX}       &   & 
         \mbox{Sierpinski Gasket as a subset of $\mathbb{R}^2$}  \\
\hline
\ms  \phantom{X^X} & (G,g) & (S,s)\mbox{, where $S$ is the completion of $G$}     \\
\hline
\hline
\ml  \phantom{X^X} &  (G_\rho,g)  &     \mbox{none exists} \\
\hline
\mc \phantom{X^X} &  (G_\rho,g)  &  (S,s)\mbox{ and $(\SG,\sigma)$}\\
    \hline
\end{array}
\]
\caption{Results on initial algebras and final coalgebras of the functor $F$ in this paper.
The new results are the bottom two rows.
In them,
$G_{\rho}$ is the 
set $G$ with the discrete metric.
\label{fig-results}}
\end{figure}

The main results in this paper concern the characterizations of the Sierpinski gasket in metric terms found in~\cite{mrrTACL,Bhat}.
A \emph{tripointed metric space}
$(X, d)$ is a tripointed set  $(X,T,L,R)$
equipped with a $1$-bounded metric $d$ (i.e.,  $d(x,y) \leq 1 \,\,\, \forall\, x,y\in X$), such that the distance between any pair of distinguished elements is $1$. 
In the papers mentioned above, the morphisms were taken to be maps
preserving this tripointed structure which additionally were 
short (non-increasing):  $d(f(x), f(y)) \leq d(x,y)$.    
In addition, the morphisms must preserve the distinguished
elements $T$, $L$, and $R$.  
In this paper, we investigate the characterization result when we vary the morphisms, allowing them to be Lipschtiz maps or continuous maps.

The main results in the paper are presented in
Figure~\ref{fig-results}.
They are stated in terms of three metric categories with a metric structure: $\ms$, $\ml$, and $\mc$.
Note that we have a chain of subcategories.
\[\ms  \subset \ml \subset \mc.\]
 There are forgetful functors from these categories ($\ms$, $\ml$ and $\mc$) to $\set$.
We shall discuss three endofunctors $F\colon \CC\to \CC$, where $\CC$ is one of our categories;
we use the same notation $F$ because in all of our settings, $F$ acts the same way on objects.
We study the initial algebra of $F$ and also the final coalgebra of $F$ in all settings. 
The first two rows of the chart were established in~\cite{mrrTACL,Bhat}, and the bottom two rows are new here.

Since we are mainly interested in fractal sets characterized as final coalgebras,
the reader might wonder why we also mention initial algebras.   The reason is that the
results in~\cite{mrrTACL,Bhat} construct the final coalgebra as the Cauchy completion of
the initial algebra (with the inverse structure).   Incidentally, this is an usual occurrence:
one can find examples where the initial algebra and final coalgebra 
are the same object with inverse structures, but what we have in this area seems rare.
In any case, this motivates our interest in the initial algebras in this paper.

\paragraph{Contribution:}
We prove that in the setting of $\mc$, the final coalgebra is the completion of the initial algebra
of $F\colon \ms\to\ms$ (with the inverse structure).
This is perhaps our main result.
We also give a new proof of this previously-shown result for $\ms$:
the final coalgebra of $F\colon \ms\to\ms$ is the completion of the initial algebra
of the same functor.  
Since the method of proof in this
paper gives both results, we feel it has an advantage over what was done previously.
We also determine the initial algebra of $F$ on $\mc$ and $\ml$: it is the initial algebra for
$\ms$ but with the discrete metric. 
We show that $F\colon\ml\to\ml$ has no final coalgebra.

\section{Preliminaries}
\label{section-preliminaries}

\subsection{Tripointed sets and spaces}

A \emph{tripointed set} is a set $X$ together with three 
distinguished points called $T$ (\emph{top}),
$L$ (\emph{left}), and $R$ (\emph{right}).
We require that $T$, $L$, and $R$ be distinct.
When it is clear from the context, we often omit the distinguished points from the description of the set.
The category $\mathbf{Set_{3}}$ 
has as objects the tripointed sets and
as 
 morphisms the functions which preserve the distinguished points.

We use subscripts to differentiate distinguished points of two tripointed sets $X$ and $Y$. For example, $T_X$ and $T_Y$ for $T$ of $X$ and of $Y$ respectively.

\begin{example}[Sierpinski Gasket] \label{Sierpinski} 
The main example is of course 
 of the Sierpinski gasket
(or triangle), denoted
$\s$.  It is defined in terms of
a system of iterated functions as in~\cite{Hut}.
 Consider the maps $\sigma_a, \sigma_b,\sigma_c:\mathbb{R}^2\rightarrow \mathbb{R}^2$ given by
\begin{equation}
\label{sigmas}
\begin{array}{lcl}
    \sigma_a(x,y)&= &(x/2, y/2)+(1/4, \sqrt{3}/4)\\
    \sigma_b(x,y)&=& (x/2, y/2)\\
    \sigma_c(x,y)&= & (x/2, y/2)+(1/2,0)
  \end{array}
    \end{equation}
Then, $\s$ may be defined as the unique non-empty compact set with  $\s =
\sigma_a(\s) \cup \sigma_b(\s) \cup\sigma_c(\s)$.

We consider $\s$ as an object of $\set$ by:
$T_{\s} = (1/2, \sqrt{3}/2)$, 
$L_{\s} = (0,0)$, 
$R_{\s} = (1,0)$.
These are the fixed points of
the contractions $\sigma_a$, $\sigma_b$, and $\sigma_c$, respectively
\end{example}

\subsection{The functor $F = M\otimes -$}

We next present the functor that is central to the discussion of this paper. Definitions below were considered previously in \cite{mrrTACL,Bhat}, and the notation follows \cite{Lein}.

We first define $F$ on $\set$. Fix the set $M=\{a,b,c\}$. 
We generally use the letter  $m$ as a variable over $M$.
Given a tripointed set $X$, $M \times X$ consists of three copies of $X$ labeled by $a,\,b$, and $c$. Let $\sim$ be the equivalence relation on  $M \times X$ generated by the relations $(b,T)\sim (a,L)$, $(a,R)\sim (c,T)$, and $(c,L) \sim (b,R)$. Let $M \otimes X$ be the set of equivalence classes of $M \times X$ with respect to $\sim$. We will denote the equivalence  class of an element $(m, x)\in M \times X$ by $m\otimes x$. The set $M\otimes X$ is lifted to a tripointed set by choosing $a\otimes T$, $b\otimes L$, and  $c\otimes R$ as $T_{M\otimes X}$, $L_{M\otimes X}$, and $R_{M\otimes X}$ respectively. We set  $FX = M \otimes X$.

This description identifies (glues) certain points of the three copies of $X$. For example the relation $(b,T)~\sim(a,L)$ identifies $L$ of copy $a$ with $T$ of copy $b$, and similarly for other relations, as shown in Figure \ref{ffig1} (see also \cite{Bhat}). Finally, the top element of the top triangle, left element of the left triangle, and right element of the right triangle were taken as the three distinguished points, making it an object of $\set$.

\begin{figure}[!ht]
\centering
\includegraphics[scale=0.5]{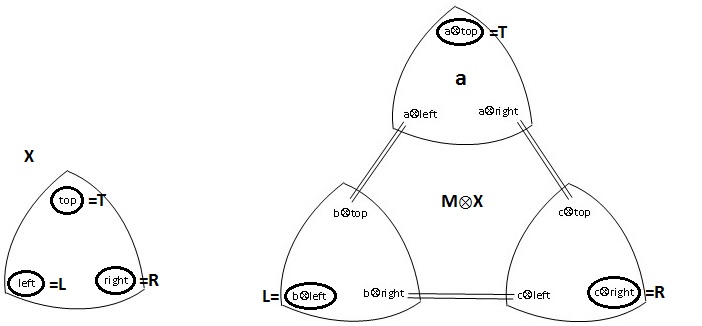}
\caption{Depiction of $FX=M\otimes X$}
\label{ffig1}
\end{figure}

The action of $F$ on morphisms is as follows. Given a morphism $f:X\rightarrow Y$ of tripointed sets, $Ff : FX=M\otimes X\rightarrow  FY=M\otimes Y$ is given by $Ff(m\otimes x)=m\otimes f(x)$. It is an easy exercise to show that $Ff$ is well-defined and preserves the distinguished elements. Hence, $Ff$ is a morphism in $\set$.

Based on the description of $F$ on objects, we frequently write
$M\otimes f$ for $Ff$. With this notation, $M\otimes -$ is an alternative notation for 
the functor $F$. The definition of $\mtimes$ can be extended
from sets to metric spaces, thereby defining endofunctors on $\ms$,   $\ml$ and $\mc$.
These  are also designated by $F=\mtimes$.
 First, for a given tripointed metric space $(X,d)$, $M\times X$ is given the metric defined as follows.
	$$d_{M\times X}\left((m,x),(n,y)\right)=\left\{%
	\begin{array}{ll}
	\frac{1}{2}d(x,y), & \hbox{$m=n$;} \\\\
	1, & \hbox{$m\neq n$.}
	\end{array}%
	\right.$$ 
Then, $M\otimes X$ is defined to be the  
\emph{quotient metric space} of  $M\times X$.  That is, we declare identified points as having distance $0$, and then take the distance between $m_1\otimes x,m_2\otimes y\in M\otimes X$ to be the infimum over the sums of the segment lengths in finite sequences of points starting with $m_1\otimes x$ and ending with $m_2\otimes y$.  Though the quotient metric in general is only a pseudometric, in our case it is indeed a metric and the distance between two elements can be computed explicitly as follows. 
	
\begin{lemma}[Lemma 2.3, \cite{mrrTACL}]
\label{LemmaMetricOnTensor}
  The metric in $M\otimes X$ is given by:
  %$~d_{M\otimes X}(m\otimes x, n\otimes y)~$   
\begin{align*}
d_{M\otimes X}((a\otimes x),(a\otimes y)) &=
	\frac{1}{2}d_X(x,y) \\
d_{M\otimes X}((a\otimes x),(b\otimes y)) &=\frac{1}{2} \min \{ d_X(x,L)+d_X(T,y)~,~d_X(x,R)+1+d_X(R,y) \}
\end{align*}
More generally, $d_{M\otimes X}((m_1\otimes x),(m_2\otimes y))$ may be calculated
similarly for all $m_1,m_2\in M$ and $x,y\in X$.
\end{lemma}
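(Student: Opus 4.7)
The plan is to unwind the definition of the quotient pseudometric and reduce the infimum to a minimum over a few explicit chains. By construction,
\[
d_{M\otimes X}(p,q) \;=\; \inf \sum_{i=0}^{n-1} d_{M\times X}(s_i,t_i),
\]
taken over finite sequences $(s_0,t_0),\dots,(s_{n-1},t_{n-1})$ in $M\times X$ with $s_0\sim p$, $t_{n-1}\sim q$, and $t_i\sim s_{i+1}$. The first step is to normalize so that each pair $(s_i,t_i)$ lies in a single copy of $X$: a cross-copy pair has $d_{M\times X}$-length exactly $1$, and it can always be replaced by two within-copy segments meeting at a glued corner, with total length at most $\tfrac{1}{2}+\tfrac{1}{2}=1$ by the $1$-boundedness of $d_X$. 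A second normalization, fusing consecutive segments that stay in the same copy via the triangle inequality in $X$, lets one assume that every copy visited by the chain is entered and exited only at the glued corners $T,L,R$.

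After these reductions, a chain is encoded by a walk $a=c_0,c_1,\dots,c_k$ on the copy-graph whose nodes are $\{a,b,c\}$ and whose edges are the three identifications $a\otimes L=b\otimes T$, $a\otimes R=c\otimes T$, $b\otimes R=c\otimes L$, together with the data of the starting point $x$ in $c_0$ and the ending point $y$ in $c_k$. For the first formula, $(a\otimes x,a\otimes y)$, the trivial walk $k=0$ gives the direct segment of cost $\tfrac{1}{2}d_X(x,y)$, and any walk that leaves and returns to copy $a$ contributes at least $\tfrac{1}{2}(d_X(x,v)+d_X(v,y))\ge \tfrac{1}{2}d_X(x,y)$ for some $v\in\{L,R\}$, again by the triangle inequality. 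For the second formula, $(a\otimes x,b\otimes y)$, the two simple walks from $a$ to $b$ on the triangle $\{a,b,c\}$ are the direct one, giving cost $\tfrac{1}{2}(d_X(x,L)+d_X(T,y))$, and the detour through copy $c$, giving cost $\tfrac{1}{2}(d_X(x,R)+d_X(T,L)+d_X(R,y))=\tfrac{1}{2}(d_X(x,R)+1+d_X(R,y))$ since $d_X(T,L)=1$.

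The step I expect to be the main obstacle is ruling out non-simple walks that revisit a copy. My plan is a short induction on the number of visits: if a walk visits some copy $c_*$ at two separate times with entry/exit corners $(v,u)$ and $(v',u')$, the triangle inequality in $X$ together with the known distances among $T,L,R$ produces an alternative walk visiting $c_*$ only once whose total length is no greater. Iterating reduces every walk to a simple one, and among simple walks the ones listed above exhaust the cases, so their minimum is exactly the right-hand side of the displayed formulas. The remaining clause about arbitrary $(m_1\otimes x,m_2\otimes y)$ then follows by the symmetry of the construction under simultaneously permuting $\{a,b,c\}$ and $\{T,L,R\}$.
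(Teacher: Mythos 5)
The paper does not actually prove this lemma; it is imported verbatim as Lemma 2.3 of the cited prior work, so there is no in-paper argument to compare yours against. On its own merits, your proposal is the standard and essentially correct computation of a quotient metric: reduce arbitrary chains to within-copy segments (using $1$-boundedness to replace each cross-copy jump of cost $1$ by two half-segments through a glued corner), fuse consecutive segments, and then minimize over walks on the copy-graph, using $d_X(T,L)=d_X(T,R)=d_X(L,R)=1$ to evaluate the detour through copy $c$. Two small points deserve more care in a written-up version. First, in the $(a\otimes x, a\otimes y)$ case your lower bound ``at least $\frac{1}{2}(d_X(x,v)+d_X(v,y))$ for some $v$'' does not literally come from the triangle inequality when the walk exits copy $a$ at $L$ and re-enters at $R$; there the correct estimate is $\frac{1}{2}d_X(x,L)+1+\frac{1}{2}d_X(R,y)\ge 1\ge \frac{1}{2}d_X(x,y)$, using $1$-boundedness rather than a single intermediate corner. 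Second, the reduction of non-simple walks is cleanest as an induction on the number of inter-copy transitions, using that each transition forces passage through a specific glued corner and that any two corners within a copy are at distance $\frac{1}{2}$; your sketch is compatible with this but should be made explicit. The closing symmetry claim is fine: the identifications are invariant under the $S_3$ action that simultaneously permutes $\{a,b,c\}$ and $\{T,L,R\}$ compatibly, so the remaining cases do follow from the two displayed ones.
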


Recall that the metric $d$ on a tripointed space $X$ is
required to be  $1$-bounded.
It is easy to check that  the  metrics on $M\times X$ and  $M\otimes X$ are also $1$-bounded.
 Moreover, it can be easily verified that the distance between the distinguished elements of $M\otimes X$ is $1$. 
 Hence $(M\otimes X,d_{M\otimes X})$  is an object of the category of tripointed metric spaces.
%\footnote{LM: it seems very confusing to use the same notation for two different things.}\\

To define $F=M \otimes -$ on the categories $\mc$, $\ml$, and $\ms$, 
we use the following result:

\begin{lemma}
\label{presFunctions}
Let $X$ and $Y$ be two tripointed metric spaces. If $f: X \rightarrow Y$  has any of the following properties then so does $Ff=M\otimes f$.
\begin{enumerate}[i).]
\item \label{cnt} Continuous
\item \label{lip} Lipschitz
\item \label{shrt} Short map
\item \label{isoemb} Isometric embedding
\end{enumerate}
\end{lemma}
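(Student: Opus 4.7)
My plan is to verify the four properties in turn, exploiting the two equivalent descriptions of $d_{M\otimes X}$: the definition as an infimum over sums of segment lengths in finite chains through $M\times X$, and the closed-form expression from Lemma \ref{LemmaMetricOnTensor}.

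For the short case (\ref{shrt}) and the Lipschitz case (\ref{lip}), I would argue directly from the chain-sum description. A chain joining representatives of two classes in $M\times X$ consists of segments each contributing either $\frac{1}{2}d_X(x,y)$ when the $M$-coordinates agree, or $1$ when they differ. Applying $f$ pointwise produces a chain in $M\times Y$; this makes sense because $f$ preserves $T$, $L$, $R$ so the generating identifications are respected. Each same-$M$ segment is rescaled from $\frac{1}{2}d_X(x,y)$ to $\frac{1}{2}d_Y(f(x),f(y))$, while the unit segments are unchanged. Therefore the chain sum is multiplied by at most $\max(K,1)$ in the $K$-Lipschitz case and is non-increasing in the short case, and passing to the infimum gives the claimed bound for $d_{M\otimes Y}(Ff(p),Ff(q))$.

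For the isometric embedding case (\ref{isoemb}), I would apply the closed-form formula in Lemma \ref{LemmaMetricOnTensor}. Every distance that appears on the right-hand side is either a distance in $X$ between a pair of points (possibly including the distinguished ones) or the constant $1$. Since $f$ is an isometric embedding preserving $T$, $L$, $R$, each such distance transfers unchanged to $Y$. Thus the formula evaluated at $(m_1\otimes f(x),m_2\otimes f(y))$ in $M\otimes Y$ agrees term-by-term with the formula at $(m_1\otimes x,m_2\otimes y)$ in $M\otimes X$.

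The continuity case (\ref{cnt}) is the most subtle, and I expect it to be the main obstacle. My plan is sequential continuity via the closed-form formula. Suppose $m_n\otimes x_n \to m_0\otimes x_0$ in $M\otimes X$; since $M$ is finite, pass to a subsequence with $m_n$ constant, say $m_n=m$. The formula in Lemma \ref{LemmaMetricOnTensor} then expresses $d_{M\otimes X}(m\otimes x_n, m_0\otimes x_0)$ as either $\tfrac12 d_X(x_n,x_0)$ (if $m=m_0$) or, if $m\neq m_0$, the minimum of two sums of the form $d_X(x_n,P)+d_X(P',x_0)$ where $P,P'\in\{T,L,R\}$ are the glued pair between the $m$- and $m_0$-copies. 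In every case, convergence of this quantity to zero forces $x_n$ to converge in $X$ to a point $x_\infty$ with $m\otimes x_\infty = m_0\otimes x_0$. Continuity of $f$ at $x_\infty$, together with the preservation of $T$, $L$, $R$, then yields convergence of the corresponding $Y$-distances to zero, and the same formula, now read in $M\otimes Y$, gives $m\otimes f(x_n)\to m_0\otimes f(x_0)$. The point that needs genuine care is that a sequence approaching a glued point may do so from either ``side,'' so the optimizing branch in the minimum can switch along the sequence; splitting into subsequences according to which branch is attained reduces this to the cases above.
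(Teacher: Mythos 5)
Your proof is correct, and it reaches the conclusion by a route that differs from the paper's in two of the four cases. The paper handles the Lipschitz case (and, by its own remark, the isometric-embedding case) entirely through the closed-form expression of Lemma~\ref{LemmaMetricOnTensor}: assuming $k\ge 1$, each summand of the minimum is bounded by $k$ times the corresponding summand (the unit term via $1\le k\cdot 1$), and the factor $k$ pulls out of the minimum. Your chain-sum argument for the short and Lipschitz cases is genuinely different and arguably more robust: it uses only that $f$ preserves the generating identifications (so a chain in $M\times X$ pushes forward to a chain in $M\times Y$) and that same-copy segments scale while cross-copy segments stay at $1$; this avoids the case analysis over pairs $(m_1,m_2)$ and would work even without an explicit formula for the quotient metric, at the cost of the constant $\max(K,1)$, which the paper sidesteps by normalizing $k\ge 1$. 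For continuity, the paper gives a direct $\epsilon$--$\delta$ argument at $a\otimes x$: if $d_{M\otimes X}(a\otimes x,b\otimes y)<\delta\le\frac14$, the formula forces $x$ near $L_X$ and $y$ near $T_X$, and continuity of $f$ (at the glue points) transfers this to $Y$. Your sequential version rests on exactly the same observation --- closeness across distinct copies forces proximity to a glued pair, which $f$ preserves because it preserves $T$, $L$, $R$ --- so it is the same idea in different clothing; note also that your worry about the optimizing branch switching is automatic to dispose of, since the branch containing the $+1$ is bounded below by $\frac12$ and therefore cannot be the one tending to $0$. The isometric-embedding case you treat exactly as the paper does.
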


\begin{proof}
(\ref{cnt}). 
Given $f$ is continuous, we shall first show that $M\otimes f$ is continuous at $a\otimes x\in M\otimes X$. Fix $a \otimes x \in M\otimes X$ and $\epsilon>0$. Since $f$ is continuous, $\exists\,\delta_0 >0$ such that $d(f(x),f(y))<\epsilon $,  whenever $d(x,y  )<\delta_0 $.  Choose  $\delta=\min \{   %\displaystyle
\frac {\delta_0}{2},\frac {1}{4} \}$. Suppose $d_{M\otimes X}(a\otimes x, b\otimes y)<\delta$. Then we have $d_{X}(x,L_{X})< \delta_0$ and $d_{X}(y,T_{X})<\delta_0 $. By the continuity of $f$ we have $d_{Y}(f(x),f(L_{X}))<\epsilon$ and $d_{Y}(f(y),f(T_{X}))<\epsilon$. Thus, $d_{M\otimes Y}(Ff(a\otimes x), Ff (b\otimes y) )<\epsilon$, which is the required condition for $F(f)$ to be continuous at $a\otimes x$. 

One can show the continuity of $M\otimes f$ for the other cases in a similar way. Thus, we have that $M\otimes f$ is continuous when $f$ is continuous.\\
(\ref{lip}).	Let $~f~$ be a Lipschitz continuous function with Lipschitz constant $k\geq 1$. We shall note that 
\begin{align*}
\text{min} & \left\{ d\left(f(x),L\right)+d\left(T,f(y)\right)~,~d\left(f(x),R)+1+d(R,f(y)\right) \right\} \\ 
   & =\ \min \left\{ d(f(x),f(L))+d(f(T),f(y))~,~d(f(x),f(R))+1+d(f(R),f(y)) \right\}\\
   & \leq  \min \left\{k\cdot d(x,L)+k\cdot d(T,y)~,~k\cdot d(x,R)+k\cdot 1+k\cdot d(R,y) \right\}\\
   & =  k \cdot \min \left\{d(x,L)+d(T,y)~,~d(x,R)+1+d(R,y)\right\}
\end{align*}

\noindent The other cases for the distance (as in Lemma \ref{LemmaMetricOnTensor}) are even easier. Thus, 
\[
d_{M\otimes Y} \left(Ff(a\otimes x),Ff(b\otimes y)\right) = d_{M\otimes Y} \left(a\otimes f(x), b\otimes f(y)\right) \leq k \cdot d_{M \otimes X} \left(a\otimes x,b\otimes y\right) 
\]
This shows that $M\otimes f$ is Lipschitz whenever $f$ is Lipschitz.

(\ref{shrt}) is proved in Proposition 8 of \cite{Bhat} and the proof of (\ref{isoemb}) is similar to that of (\ref{lip}).
\end{proof}

It now follows from Lemma \ref{presFunctions} that $F$ is a well-defined endofunctor on $\mc$ and that it restricts to an endofunctor on the categories $\ms$  and  $\ml$. We use the same letter $F$ to refer to all these functors and specify the domain only when there is ambiguity.

Here is 
the example which  is central to our discussion.

\begin{example}\label{Sierpinski2}
We build on Example~\ref{Sierpinski}.
Define the map $\tau \colon M\otimes \s \rightarrow \s$ by 
\[ \tau\left(m\otimes x \right)=\sigma_m\left(x \right).\]
We check easily
that $\tau$ is a well-defined bijection preserving the distinguished elements.  Then $\s$ carries a coalgebra structure for $F$ on $\set$, given by \[\sigma = \tau^{-1} : \s \rightarrow M\otimes \s.\]
Elementary calculations show
that  $\tau(a\otimes R) = \sigma_a(R) = (\frac{3}{4}, \frac{\sqrt{3}}{4})$,
and
\[
d_{\s}(\tau(b\otimes L), \tau(a\otimes R)) = 
\frac{\sqrt{3}}{2} <  1 = d_{M\otimes \s}(b\otimes L, a\otimes R).
\]
This shows that $\tau$ has no inverse in $\ms$.  Obviously $\tau$ does have an inverse 
in $\set$.  And for that matter, it has an inverse in $\mc$ and $\ml$.
\end{example}

\begin{definition}
Given a tripointed set $X$ (or a metric space), we will denote $F^2X=F\left(FX\right) = M\otimes \left(M\otimes X \right)$ by $M^2\otimes X$. Similarly, given a morphism $f$ in $\set$, $M^2\otimes f$ will stand for
$F^2f=F\left(F f\right)=M\otimes\left(M\otimes f\right)$. Likewise, for any integer $n\in \nn$,  the $n$-fold composition of $F$ will be denoted by $M^n\otimes -$. We set $M^0\otimes X =X$ and $M^0 f=f$. A typical element of $M^n\otimes X$ will be denoted by $m_0\otimes \cdots \otimes m_{n-1} \otimes x$.
Frequently we abbreviate a long ``tensor product'' of elements of $M$ in boldface,
writing $\mathbf{m}$ for $m_0\otimes m_1 \otimes \cdots \otimes m_{n-1}$.
\end{definition}

The following observation easily follows from Lemma \ref{LemmaMetricOnTensor}.

\begin{lemma}[Lemma 15, \cite{Bhat}]
\label{totalybound}
Let $X$ be a tripointed metric space (i.e., an object of $\mc$), $n\in \nn$ and 
$m_0, \cdots,m_n\in M$.
Then for all $x,y \in X$,
\[
d_{M^n\otimes X} \left(\mathbf{m} \otimes x ~,~  \mathbf{m} \otimes y \right) \leq 2^{-n} 
\]
In particular, for all $x$, $y$, $x'$, $y'\in X$
\[
\left| d_{M^n\otimes X}\left( \mathbf{m} \otimes x~,~ \mathbf{m} \otimes y \right) - 
d_{M^n\otimes X}\left( \mathbf{m} \otimes x'~,~ \mathbf{m} \otimes y' \right) \right| \leq 2^{1-n}
\]
\end{lemma}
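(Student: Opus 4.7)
The plan is to prove both inequalities by a short induction, using Lemma \ref{LemmaMetricOnTensor} to peel off one factor of $M$ at a time, and then deduce the second inequality from the first by the triangle inequality.

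For the first inequality, I would induct on $n$. The base case $n=0$ is exactly the assumption that the metric on $X$ is $1$-bounded, since $M^0 \otimes X = X$ and $2^{-0} = 1$. For the inductive step, write $\mathbf{m} = m_0 \otimes \mathbf{m}'$ where $\mathbf{m}' = m_1 \otimes \cdots \otimes m_{n-1}$, so that $\mathbf{m} \otimes x = m_0 \otimes (\mathbf{m}' \otimes x)$ in $M \otimes (M^{n-1} \otimes X)$. Since both sides have the same leading symbol $m_0$, the first clause of Lemma \ref{LemmaMetricOnTensor} applies and gives
\[
d_{M^n \otimes X}(\mathbf{m}\otimes x,\mathbf{m}\otimes y) \;=\; \tfrac{1}{2} d_{M^{n-1}\otimes X}(\mathbf{m}'\otimes x,\mathbf{m}'\otimes y) \;\leq\; \tfrac{1}{2}\cdot 2^{-(n-1)} \;=\; 2^{-n},
\]
where the inequality is the inductive hypothesis.

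For the second inequality, I would apply the triangle inequality twice and then quote the first part. Concretely,
\[
d_{M^n\otimes X}(\mathbf{m}\otimes x,\mathbf{m}\otimes y) \;\leq\; d_{M^n\otimes X}(\mathbf{m}\otimes x,\mathbf{m}\otimes x') + d_{M^n\otimes X}(\mathbf{m}\otimes x',\mathbf{m}\otimes y') + d_{M^n\otimes X}(\mathbf{m}\otimes y',\mathbf{m}\otimes y),
\]
and by the first part each of the two ``correction'' terms is bounded by $2^{-n}$. This yields
\[
d_{M^n\otimes X}(\mathbf{m}\otimes x,\mathbf{m}\otimes y) - d_{M^n\otimes X}(\mathbf{m}\otimes x',\mathbf{m}\otimes y') \;\leq\; 2^{1-n}.
\]
The symmetric inequality is obtained by swapping the roles of the unprimed and primed pairs, so the absolute value is at most $2^{1-n}$.

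I do not expect a serious obstacle here. The only subtle point is confirming that the recursive appeal to Lemma \ref{LemmaMetricOnTensor} really applies, i.e.\ that one may regard $M^n \otimes X$ as $M \otimes (M^{n-1} \otimes X)$ with $M^{n-1}\otimes X$ already equipped with its quotient metric; this is exactly the definition of $M^n \otimes -$ as the $n$-fold iterate of the functor $F$, so there is no circularity.
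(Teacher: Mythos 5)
Your proof is correct, and it is exactly the argument the paper intends: the paper gives no proof, remarking only that the lemma ``easily follows from Lemma~\ref{LemmaMetricOnTensor},'' and your induction via the first clause of that lemma (peeling off one tensor factor at a time, each halving the distance) together with the two-sided triangle-inequality step for the second claim is the natural way to fill that in. Your closing remark correctly identifies and resolves the only subtlety, namely that $M^n\otimes X = M\otimes(M^{n-1}\otimes X)$ with $M^{n-1}\otimes X$ already a $1$-bounded tripointed metric space, so the recursion is legitimate.
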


\begin{example}
The set $I=\{T, L, R\}$, with distinguished points as suggested by the notation, is the initial object of the category $\set$:
for every tripointed set $X$, there is a unique $\set$-morphism from $I$ to $X$.
We endow $I$ with the discrete metric, so that $d(z,z') = 1$ for $z\neq z'$.  This is then the initial object in 
 all of our metric categories.

Here is $F I$ in $\set$.  It is a set with six elements:
\[
\begin{array}{l}
a \otimes T \\
a\otimes L = b\otimes T \\
a \otimes R = c\otimes T
\end{array}
\qquad
\begin{array}{l}
%b \otimes T
 \\
b\otimes L  \\
b \otimes R = c\otimes L
\end{array}
\qquad
\begin{array}{l}
%c \otimes T 
\\
%c\otimes L = b\otimes R
 \\
c \otimes R 
\end{array}
\]
The tripointed structure is $T_{F I} = a \otimes T$,
 $L_{F I} = b \otimes L$,
  $R_{F I} = c \otimes T$.
  
We exhibit   $F I$ in $\ms$, we need only give the metric on these points.
For $z\neq z'$ in $\{ T, L, R\}$, $d_{FI}(a\otimes z, a\otimes z')=\frac{1}{2}$, and similarly for $b$ and $c$.
Further,
\[ d_{FI}(a\otimes T, b\otimes L)=1 =  d_{FI}(a\otimes T, b\otimes R).\]
$F^2 I$ is then a set with $3(6) -3=15$ elements.
These may be exhibited in the form $m_0\otimes m_1\otimes z$, where $m_0, m_1\in M = \{a,b,c\}$ 
and $z\in \{T, L, R\}$.  This appears to be $27$ elements, but recall that we
have made identifications at several steps.
The metric between any two points may again be calculated
by Lemma~\ref{LemmaMetricOnTensor}.

\end{example}

\subsection{Algebras and Coalgebras}

This paper contributes to the discussion of natural mathematical objects viewed in terms of their universal properties,
specifically as either initial algebras or final coalgebras.

Let us begin with the general definition of algebras and coalgebras.
For both definitions, one begins with a category $C$ and an endofunctor 
$F\colon C\to C$. (An endofunctor is a functor from some category to itself.)

An \emph{$F$-algebra} is a pair $(A, \alpha)$, where $\alpha\colon FA \to A$
is a morphism in the underlying category $C$.
Given algebras $(A,\alpha)$ and $(B, \beta)$, an
\emph{algebra morphism} is a morphism $h\colon A \to B$ so that 
$h\o \alpha = \beta \o Fh$.
See the square on the left just below:
\[
\begin{tikzcd}
FA  \arrow{r}{\alpha} \arrow{d}[swap]{Fh}&  A \arrow{d}{h}\\
FB  \arrow{r}[swap]{\beta} &  B
\end{tikzcd}    
\qquad\qquad
\begin{tikzcd}
A  \arrow{r}{\alpha} \arrow{d}[swap]{h}&  FA \arrow{d}{Fh}\\
B  \arrow{r}[swap]{\beta} &  FB
\end{tikzcd} 
\] 
Frequently, one elides the name of the functor and the underlying category, and just speaks of \emph{algebras}.  For example:
an algebra is \emph{initial} if it has
a unique algebra morphism to every algebra.

Dually, an 
\emph{$F$-coalgebra} is a pair $(A, \alpha)$, where $\alpha\colon A\to FA$
is a morphism in  $C$.
Given coalgebras $(A,\alpha)$ and $(B, \beta)$, a
\emph{coalgebra morphism} is a morphism $h\colon A \to B$ so that 
$\beta\o h =  Fh\o \alpha$.
See the square on the right above.
A coalgebra is \emph{final} 
if every coalgebra has a unique 
coalgebra morphism into it.

We conclude with a number of examples that set the stage for our work.
For more examples and for the theory surrounding them, see e.g.~\cite{AMM}.

\begin{example}
\label{ex:NplusOne}
On $\mathbf{Set}$, consider the functor $GX = X + 1$.
An algebra for $G$ is a pair $(A,\alpha)$, where $\alpha \colon A+1 \to A$.
This map $\alpha$ may be seen as the choice of a point $a_0\colon 1 \to A$ together with
a self-map $a_1\colon A\to A$. One example is $N$, the natural numbers together with $0$
(here $0$ is taken to be a map $n_0\colon 1 \to N$) and the successor function $a_1(n) = n+1$.
This algebra turns out to be initial; this is equivalent to the assertion that a function
with domain $N$, say $f\colon N \to X$, is determined
uniquely by an algebra structure on $X$; this again amounts to an element of $X$ and a self map on $X$.

The final coalgebra for the same functor turns out to be $N^{\infty} = N\cup\{\infty\}$
with the structure $n\colon N^{\infty}\to N^{\infty} + 1$ given by 
$n(0) = {*}$, where the summand $1$ is taken to be $\{*\}$; $n(k) = k-1$ for other natural numbers $k$;
and $n(\infty) = \infty$.  
Given a coalgebra $(A, \alpha\colon A \to A +1)$, we naturally associate a 
partial function $\widehat{f}\colon A\to A$.  Its domain is 
the set of $a\in A$ which $f$ sends to an element of $A$ (rather than to the element of $1$).
The coalgebra morphism 
$h\colon A \to N^{\infty}$ is given by taking $a\in A$ to the supremum
in $N^{\infty}$ of all $n$ such that
$\widehat{f}^{n}(a)$ is defined.  
\end{example}

\begin{example}
Consider the category $\Met$ of metric spaces whose metric is bounded by $1$
using as morphisms the short maps.  The terminal object $1$ in this category is the one-point space.
The coproduct operation $+$ takes the disjoint union of the summands,
putting distance $1$ between points in different spaces.
Let us again write $GX = X+1$, with this understanding of $1$ and $+$. 
Its initial algebra is $N$ with the discrete metric,
and the same structure $n$ (this is short because every map with a discrete domain is short).
As for the final coalgebra, it too is the final coalgebra for $X+1$ 
on $\mathbf{Set}$
which we saw in Example~\ref{ex:NplusOne}, again with the discrete metric.
\label{nmet}
\end{example}

\begin{example}
Let us change the functor in Example~\ref{nmet} to the functor $HX = \frac{1}{2}X+1$
where  $\frac{1}{2} X$ is the space $X$ with distances scaled by $\frac{1}{2}$.
Then the initial algebra is $N$ with the metric given by $d(n,m) = 2^{-\min(n,m)}$ for $n\neq m$.
The structure is again the same as in $\mathbf{Set}$.
As for the final coalgebra, it is the same set $N^{\infty}$ with the metric given as in
the initial algebra, where we take $\infty > n$ for all $n \in N$.
So the initial algebra looks like a Cauchy sequence, and the final coalgebra is its completion.
We shall see this same phenomenon in the some (but not all) of the functors in this paper.
\end{example}

\begin{example}
\label{ex-bipointed}
Freyd (in ~\cite{freyd:08}) found a characterization of the real
unit interval $[0,1]$ as a final coalgebra.
The category  $\BiP$ of \emph{bipointed sets}
has as objects tuples $(X,\bot,\top)$, where
$X$ is a set and $\bot,\top\in X$.
We require $\bot\neq\top$.
For example, $[0,1]$ is a bipointed set with $\bot = 0$ and $\top = 1$.
Morphisms in $\BiP$ are set functions which preserve $\bot$ and $\top$.
The functor $K\colon\BiP\to\BiP$ takes a
bipointed set $X$ to
\[
(((X\times\{0\}) \cup (X\times\{1\}))\setminus\{(\top,0),(\bot,1)\},
(\bot,0), (\top,1))
\]
Put differently, this is 
two copies of $X$ with $\top$ in the
first copy identified with $\bot$ in the second, and
with bipointed structure given by $\bot$ in the first copy and $\top$ in the second.
The coalgebra structure on $[0,1]$ takes 
$x$ in the first copy to $\frac{1}{2}x$, and $x$ in the second copy to
$\frac{1}{2}x + \frac{1}{2}$.
Freyd showed that this coalgebra is final.
\label{ex-freyd}
\end{example}

\begin{example}
The category $\BiMS$ of bipointed metric spaces 
combines $\Met$ and $\BiP$.
Objects are metric spaces with distinguished points $\bot$ and $\top$;
these are required to be of distance $1$.
Morphisms are short maps preserving $\bot$ and $\top$.
The same functor $K$ adapts to this setting.
Its initial algebra is the set of dyadic rational numbers in $[0,1]$,
with the usual metric and with the algebra structure
adapted from what we saw in Example~\ref{ex-freyd}.
Its final coalgebra is the same as that of Example~\ref{ex-freyd},
and again the metric in this case is the usual one.
For proofs, see~\cite{AMM,Bhat}.
\end{example}

\begin{example}
\label{ex-sigma}
Example~\ref{Sierpinski2} defines a morphism  
$\sigma \colon \s\to M\otimes \s$
in $\set$, $\mc$, and $\ml$.
Thus, we have coalgebras.
It will turn out that in $\set$ and $\mc$, this coalgebra is final.
This is not the case in $\ml$: no final coalgebra exists there.  
In $\ms$, there is a final coalgebra
but it is constructed differently, as the completion of the initial algebra.
\end{example}

%\subsection{The Sierpinski Gasket}

\section{Previous Results}
\label{section-previous-results}

This paper extends results in the area, notably 
\cite{Bhat}.
Since we are going to use some details of 
the work in those papers, we must present some of the development.   
We review the initial algebra of $F$ and then describe a canonical metric on it and its Cauchy completion.

\subsection{Initial Algebras of $F$}
\label{section-initial-algebra-construction}

First, let us discuss the initial algebras of $FX = M\otimes X$ on all of our categories.
As is common throughout mathematics, we may construct an initial algebra by \emph{iteration in $\omega$ steps starting with the initial object}.
Here is what this means.
The \emph{initial chain} of $F$ is the sequence of objects and morphisms shown below:
\begin{equation}
\label{initialChain}
\begin{tikzcd}
I \arrow{r}{!} & 
F I \arrow{r}{F !} &
F^{2} I  %\arrow{r}{F^{2}  !}  &
& \cdots   & 
 F^{j-1} I  \arrow{r}{F^{j-1} ! } &  
 F^{j}  I & 
 \cdots & 
 % \arrow{r}{F^{j-1} ! } &
\end{tikzcd}     
\end{equation}
where `$!$' is the unique morphism from $I$ to $F I$.
(Concretely, 
the fact that $\set$-morphisms preserve the distinguished points tells us that
$!(T) = a\otimes T$, $!(L) = b\otimes L$, and $!(R) = c\otimes R$.)

At this point, we need to take the \emph{colimit} of the chain (\ref{initialChain}).
Here is a concrete description of the colimit in $\set$.
It is the set $G$ whose elements are the expressions 
\begin{equation}
 \label{expressions}
 m_0\otimes m_1 \otimes \cdots m_{n-1} \otimes z,
 \end{equation}
where $n \geq 0$, $z\in \{T,L,R\}$, and $m_0,\ldots, m_{n-1}$ belong to $M = \{a,b,c\}$.
More precisely, we must make some identifications here 
\begin{equation}
\label{mustmake}
 a\otimes L = b\otimes T 
\qquad
a\otimes R = c\otimes T 
\qquad
b\otimes R = c\otimes L
\end{equation}
These identifications imply further identifications: for example, $b\otimes c\otimes a\otimes L = b\otimes c\otimes b\otimes T $.
We allow $n = 0$.   This enables us to endow this set $G$ with the tripointed structure $T_G = T$, $L_G = L$,
$R_G = R$.   
We have an algebra structure $\tau\colon M\otimes G \to G$ given by 
\[
\tau(m^*\otimes (m_0\otimes m_1 \otimes \cdots m_{n-1} \otimes z)) = m^*\otimes m_0\otimes m_1 \otimes \cdots m_{n-1} \otimes z
\]

The general category-theoretic result in this area
is due to Ad\'amek:

\begin{proposition}[\cite{Adamek74}]
\label{prop-Adamek}
Let 
$\CC$ be any category, and let
$F:\CC\to \CC$ be any endofunctor.
Assume that $\CC$ has an initial object $I$, 
and that the chain (\ref{initialChain}) has a colimit, say $A$.
Finally assume that $F$ preserves the colimit of 
(\ref{initialChain}).  Then there is a canonical morphism
$\alpha\colon FA\to A$ such that $(A,\alpha)$ is
an initial algebra of $F$.
Indeed, $\alpha$ may be characterized
as the unique morphism so that 
$\alpha\o F c_i = c_{i+1}$,
where $c_i \colon F^n I\to A$ is 
the map into the colimit.
\end{proposition}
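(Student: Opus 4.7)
The plan is to adapt the classical Adámek argument. Since $F$ preserves the colimit of the initial chain (\ref{initialChain}), applying $F$ termwise produces the shifted chain $FI \to F^2 I \to F^3 I \to \cdots$ whose colimit is $FA$ with coprojections $Fc_i : F^{i+1} I \to FA$. The family $(c_{i+1} : F^{i+1} I \to A)_{i \geq 0}$ is already a cocone on this shifted chain, because the cocone equations for the original $c_i$'s read $c_{i+2} \circ F^{i+1}! = c_{i+1}$. By the universal property of $FA$, there is therefore a unique morphism $\alpha : FA \to A$ satisfying $\alpha \circ F c_i = c_{i+1}$ for every $i$, which is exactly the characterization asserted in the statement.

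To show that $(A, \alpha)$ is initial, fix an arbitrary algebra $(B, \beta)$ and build a candidate morphism by induction. Define $h_0 : I \to B$ as the unique map from the initial object, and $h_{i+1} = \beta \circ F h_i : F^{i+1} I \to B$. A short induction verifies $h_{i+1} \circ F^i ! = h_i$, so that $(h_i)$ is a cocone on the original chain; factoring through the colimit $A$ yields a unique morphism $h : A \to B$ with $h \circ c_i = h_i$ for all $i$. To see $h$ is an algebra morphism, I use the fact that $FA$ is a colimit and check the equation $h \circ \alpha = \beta \circ Fh$ after precomposition with each $F c_i$: the left-hand side becomes $h \circ \alpha \circ Fc_i = h \circ c_{i+1} = h_{i+1}$, while the right-hand side becomes $\beta \circ Fh \circ Fc_i = \beta \circ F(h \circ c_i) = \beta \circ F h_i = h_{i+1}$.

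Uniqueness of $h$ comes from a second induction. Any algebra morphism $h' : (A, \alpha) \to (B, \beta)$ satisfies $h' \circ c_0 = h_0$ by initiality of $I$, and $h' \circ c_{i+1} = h' \circ \alpha \circ F c_i = \beta \circ F(h' \circ c_i)$, so inductively $h' \circ c_i = h_i$ for every $i$, which forces $h' = h$ by the universal property of $A$.

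The main obstacle is really just careful bookkeeping: one must keep straight that the colimit-preservation hypothesis is what converts the cocone $(c_{i+1})$ into the structure map $\alpha$, and one must apply the universal property of $FA$ (not of $A$) when checking the algebra-morphism square. All the genuine content is packaged into the single hypothesis that $F$ preserves the colimit of the initial chain; after that, the proof reduces to two appeals to universal properties and two straightforward inductions on the chain index.
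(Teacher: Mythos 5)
Your proof is correct and is precisely the classical Adámek argument that the paper cites from \cite{Adamek74} without reproducing: the shifted chain's colimit gives $\alpha$, cocones built by $h_{i+1}=\beta\circ Fh_i$ give existence, and the two inductions plus joint epicness of the colimit coprojections give the algebra-morphism square and uniqueness. Nothing is missing.
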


We use this to build an initial algebra for $F$ on $\set$ and $\ms$.
We discuss the situation with $\ms$.
The initial object in  is
$I = \{T,L,R\}$.
The map $!\colon I\to FI$ is an isometric embedding.   
It follows from Lemma~\ref{presFunctions} that 
each space $F^j I$ is isometrically embedded in its successor $F^{j+1}I$.   As a general fact about metric spaces,
we may take the (disjoint) union of these spaces, and we obtain a metric space which isometrically embeds each $F^j I$.
As a set, this union is $G$ from above.
The maps $c_j\colon F^j I\to G$
are inclusions, and they preserve the distance.  
In particular, $c_0\colon I = F^0 I \to G$ is an isometric embedding, and we thus have a tripointed metric structure on $G$.

\begin{remark}\label{isometricembeddingintoG} In fact, for every $n$, $M^n\otimes I$ embeds isometrically into $G$, so for ${\bf m}\otimes z,{\bf n}\otimes z'\in M^n\otimes I$, $d_G({\bf m}\otimes z,{\bf n}\otimes z') = d_{M^n\otimes I}({\bf m}\otimes z,{\bf n}\otimes z')$.   \end{remark}

\begin{lemma} $\tau: F G \to G$ is an isometry.
\end{lemma}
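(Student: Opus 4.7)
The explicit formula $\tau(m^*\otimes (m_0\otimes \cdots \otimes m_{n-1}\otimes z)) = m^*\otimes m_0\otimes \cdots \otimes m_{n-1}\otimes z$ makes $\tau$ a bijection of the underlying tripointed sets, so the content of the lemma is that $\tau$ preserves distances. The plan is to reduce this to the finite stages $F^n I$ of the initial chain, where distance preservation is essentially tautological, and then transport the result along the colimit cocone.

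Given $u = m_1\otimes x$ and $v = m_2\otimes y$ in $FG = M\otimes G$, I will choose $n$ large enough that both $x$ and $y$ lie in the image of $c_n\colon F^n I\to G$; this is possible because $G$ is the union of the chain. Writing $x=c_n(x')$ and $y=c_n(y')$ gives $u = Fc_n(m_1\otimes x')$ and $v = Fc_n(m_2\otimes y')$, so both $u$ and $v$ lie in the image of $Fc_n\colon F^{n+1}I\to FG$. By Remark~\ref{isometricembeddingintoG}, $c_n$ and $c_{n+1}$ are isometric embeddings, and Lemma~\ref{presFunctions}(\ref{isoemb}) guarantees that $F$ preserves isometric embeddings, so $Fc_n$ is an isometric embedding as well.

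From this it will follow that $d_{FG}(u,v) = d_{F^{n+1}I}(m_1\otimes x', m_2\otimes y')$. On the other hand, Ad\'amek's characterization in Proposition~\ref{prop-Adamek} gives $\tau\circ Fc_n = c_{n+1}$, so $\tau(u)=c_{n+1}(m_1\otimes x')$ and $\tau(v)=c_{n+1}(m_2\otimes y')$, and the isometry of $c_{n+1}$ yields $d_G(\tau(u),\tau(v)) = d_{F^{n+1}I}(m_1\otimes x', m_2\otimes y')$. Comparing these two expressions finishes the proof. The only non-automatic input is the compatibility $\tau\circ Fc_n=c_{n+1}$, which is precisely the defining property of the algebra structure produced by Ad\'amek's theorem; no real obstacle is expected, and notably the argument never touches the explicit metric formula of Lemma~\ref{LemmaMetricOnTensor}.
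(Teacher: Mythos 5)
Your argument is correct, and since the paper states this lemma without proof, your reduction to the finite stages of the initial chain is a perfectly good way to fill that gap: every pair of points of $FG$ lies in the image of some $Fc_n\colon F^{n+1}I\to FG$, which is an isometric embedding by Remark~\ref{isometricembeddingintoG} together with Lemma~\ref{presFunctions}(\ref{isoemb}), and $c_{n+1}$ is an isometric embedding as well, so the two distances you compare both equal the distance computed in $F^{n+1}I$. The one thing to fix is your justification of the identity $\tau\circ Fc_n=c_{n+1}$. You cannot appeal to Proposition~\ref{prop-Adamek} for it: that proposition only applies once one knows $F$ preserves the colimit of the initial chain, and the whole purpose of the present lemma is to establish exactly that, so citing Ad\'amek's characterization here is circular. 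Fortunately the identity is immediate from the explicit formula: since $c_n$ and $c_{n+1}$ are inclusions, $\tau\bigl(Fc_n(m^*\otimes w)\bigr)=\tau(m^*\otimes w)=m^*\otimes w=c_{n+1}(m^*\otimes w)$ for $w\in F^nI$. With that substitution (and a one-line check that the formula for $\tau$ respects the identifications in (\ref{mustmake}) and is surjective because $T=a\otimes T$, etc., in the colimit), the proof is complete.
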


So $F$ preserves the colimit
of the initial squence.
We  apply Ad\'amek's Theorem.

\begin{theorem}
\label{theorem-ms-initial}
$(G,\tau)$ is an initial algebra for $F$ on $\ms$.
\end{theorem}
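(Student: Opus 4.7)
The plan is to apply Ad\'amek's theorem (Proposition~\ref{prop-Adamek}). To do so I need to verify three things about $F$ acting on $\ms$: that $\ms$ has an initial object; that the initial chain (\ref{initialChain}) has a colimit in $\ms$, which will be $G$; and that $F$ preserves this colimit. Once these hold, the proposition produces a canonical algebra structure $\alpha\colon FG\to G$ characterized by $\alpha\o Fc_j=c_{j+1}$, and since $\tau$ manifestly satisfies this identity, we will have $\alpha=\tau$, giving the theorem.

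The initial object is $I=\{T,L,R\}$ with the discrete $1$-bounded metric: every $\ms$-morphism out of $I$ is determined by preservation of the distinguished points, and any map out of a discrete space is automatically short. For the colimit, note first that the unique map $!\colon I\to FI$ is an isometric embedding (its image consists of three points that are mutually at distance $1$ in $FI$), and hence by Lemma~\ref{presFunctions}(\ref{isoemb}) every bonding map $F^j(!)\colon F^jI\to F^{j+1}I$ is an isometric embedding. The set-theoretic union $G=\bigcup_{j\ge 0}F^jI$ then inherits a well-defined $1$-bounded metric whose restriction to each $F^jI$ is the original one, as recorded in Remark~\ref{isometricembeddingintoG}; together with the distinguished points from $I$ this makes $G$ an object of $\ms$, and the inclusions $c_j\colon F^jI\to G$ are short. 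To verify universality, I would, given any cocone $(X,(g_j\colon F^jI\to X)_j)$ in $\ms$, define $g\colon G\to X$ by $g(w)=g_j(w)$ for any $j$ with $w\in F^jI$; the cocone identities $g_{j+1}\o F^j(!)=g_j$ make this well-defined, any two points of $G$ eventually lie together in some $F^jI$ on which $g$ restricts to the short map $g_j$, preservation of $T,L,R$ descends from $g_0$, and uniqueness is forced by the requirement $g\o c_j=g_j$.

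For colimit preservation by $F$, the shifted chain $FI\to F^2I\to\cdots$ has the same colimit as the original (shifting an $\omega$-chain does not change its colimit), with colimiting cocone $(c_{j+1}\colon F^{j+1}I\to G)_j$. Applying $F$ to the colimiting cocone for $G$ produces another cocone $(Fc_j\colon F^{j+1}I\to FG)_j$ on the same shifted chain, and by construction $\tau\o Fc_j=c_{j+1}$. The preceding lemma, asserting that $\tau$ is an isometry and hence an isomorphism in $\ms$, therefore exhibits $(Fc_j)_j$ as colimiting too, since it factors the colimiting cocone through the isomorphism $\tau$.

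Combining these steps with Proposition~\ref{prop-Adamek} delivers the theorem. The genuinely delicate content is concentrated entirely in the preceding isometry lemma: once that is in hand, verifying the hypotheses of Ad\'amek's theorem is essentially bookkeeping. The subtlety hidden behind the well-definedness of the metric on $G$ is that the bonding maps are isometric embeddings, which in turn rests on Lemma~\ref{LemmaMetricOnTensor} to compute distances inside $F^jI$ explicitly.
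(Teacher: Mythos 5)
Your proof is correct and follows essentially the same route as the paper: both construct the colimit of the initial chain in $\ms$ as the increasing union $G$ of the isometrically embedded spaces $F^jI$, use the (unproved in the paper) lemma that $\tau\colon FG\to G$ is an isometry to conclude that $F$ preserves this colimit, and then invoke Ad\'amek's theorem (Proposition~\ref{prop-Adamek}). You supply somewhat more detail than the paper on the universal property of the union and on the shifted-chain argument, but the decomposition and the key lemma are identical.
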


\rem{
We offer new proofs of these last results 
Lemma~\ref{lemma-isshort}
and Theorem~\ref{theorem-ms-initial}
Proposition~\ref{shortOff}
and Theorem~\ref{theorem-mshort}.
}

Incidentally, these results on $(G,\tau)$
do not hold for
$(G,\tau)$
viewed as an object of $\mc$ or $\ml$:
see Proposition~\ref{prop-initial-alg-continuous}.

We close this section with a
detail on the morphisms
from $(G,\tau)$ to other algebras.

\begin{lemma}
\label{lemma-initiality}
Let $(A, \alpha\colon FA \to A)$ be any $F$-algebra on $\ms$.
Then the unique 
$F$-algebra morphism $\phi\colon (G,\tau)\to (A,\alpha)$
is given by recursion on the length of expressions in (\ref{expressions}):
$\phi(T_G) = T_A$, $\phi(L_G) = L_A$, $\phi(R_G) = R_A$, and 
\[\phi( m_0\otimes m_1 \otimes \cdots \otimes m_{n-1} \otimes z) 
= \alpha(m_0\otimes \phi (m_1 \otimes \cdots\otimes m_{n-1} \otimes z)).\]
\end{lemma}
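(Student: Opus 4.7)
The plan is to observe that the existence and uniqueness of $\phi$ are already given by Theorem \ref{theorem-ms-initial}, so the lemma really amounts to an explicit formula for a map we already know exists. I would rewrite the algebra-morphism equation $\phi\o\tau = \alpha\o F\phi$ using the explicit form of $\tau$ (prepend a tensor factor, as described in Section \ref{section-initial-algebra-construction}) and of $F\phi = M\otimes\phi$ (apply $\phi$ in the rightmost slot), obtaining exactly the recursive equation in the statement. Together with the base values forced by preservation of the three distinguished points, this determines $\phi$ on every expression (\ref{expressions}) by induction on $n$.

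The one step needing care is well-definedness: elements of $G$ are equivalence classes under the identifications (\ref{mustmake}) and the further identifications these generate on longer expressions, so the recursive formula must give the same value on any representative. For the three generators, e.g., $a\otimes L = b\otimes T$, the recursion returns $\alpha(a\otimes L_A)$ and $\alpha(b\otimes T_A)$, and these coincide because the same identification $a\otimes L_A = b\otimes T_A$ is built into $FA = M\otimes A$ by the very definition of the tensor. The cases $a\otimes R = c\otimes T$ and $b\otimes R = c\otimes L$ are identical. Derived identifications of the form $\mathbf{m}\otimes a\otimes L = \mathbf{m}\otimes b\otimes T$ (and their analogues) follow by induction on the length of $\mathbf{m}$: consistency at length $n$ is transported to length $n+1$ by one application of $\alpha(m\otimes -)$, which, being an honest function on $FA$, automatically respects equalities in its argument.

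With well-definedness in hand, the recursively defined map satisfies the base condition (preservation of distinguished points) and the algebra-morphism equation by construction, and hence coincides with $\phi$ by uniqueness from Theorem \ref{theorem-ms-initial}. I do not expect a real obstacle; the argument is essentially bookkeeping that traces the tensor structure of $G$ through $\alpha$. The only spot where a reader could stumble is conflating the two levels of identification, those in $G$ and those in $FA$, but these are in perfect correspondence and that parallel is precisely what makes the recursive formula well-posed.
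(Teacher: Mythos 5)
Your argument is correct. The paper actually states Lemma~\ref{lemma-initiality} without proof, treating it as routine bookkeeping, and what you supply is exactly the standard argument one would write down: existence, uniqueness, and shortness of $\phi$ come from Theorem~\ref{theorem-ms-initial}, the recursive formula is forced by unwinding $\phi\o\tau=\alpha\o F\phi$ together with preservation of the distinguished points, and the consistency of the formula across the identifications in $G$ is exactly mirrored by the identifications built into $FA=M\otimes A$. Your well-definedness check is strictly speaking not needed on the reading where $\phi$ is already known to exist as a function and the formula merely computes its values, but it is the right thing to verify if one reads the lemma as defining $\phi$ by recursion, and your treatment of it (direct check on the generators of (\ref{mustmake}), induction on the common prefix for the derived identifications) is at the same level of detail as the paper's own description of $G$.
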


All of the results which we stated above for $\ms$ also hold for $\set$.

\rem{
The colimit of the sequence can be determined as follows. Take $\,C=\bigcup M^{n}\otimes I$. Define a relation on $C$ as follows. Let $x,y \in C$. Then$~x\in M^{r}\otimes I $ and $y\in M^{s}\otimes I ~$  for some $~r,s$. Without lost of generality take $s>r$. The relation $\approx$ is defined by $x \approx y~$ iff $!_{sr}(x)=y$; where $!_{sr}=M^{s-1}\otimes !\circ \cdots  \circ M^{r}\otimes !$. Let $\sim$ be the equivalence closure of $\approx$. Then $G=C / \sim$, the quotient of $C$ by the equivalence relation $\sim$ is the colimit of the initial chain and also the carrier set of the initial algebra. For $x\in C$, denote by $[x]$ the equivalence class of $x$. $G$ is lifted to a tripointed set by declaring the equivalence classes $[T]$, $[L]$ and $[R]$ as the distinguished points $T_G$, $L_G$ and $R_G$ respectively. Morphisms in the colimit $C_{n}:M^{n}\otimes I \longrightarrow G$ are given by $C_{n}(x)=[x]$. The following lemma follows from Ad\'amek's Theorem (\cite{Adamek74}).

\begin{lemma}
\label{InitialAlgOnSet} \cite{Adamek}
The initial algebra of
$M\otimes -$ on $\Set$ is $(G,g:M\otimes G\rightarrow G)$, where $g:M\otimes G\rightarrow G$ is given by $g(m\otimes [x])=[m\otimes x]$.
\end{lemma}

One can iterate an $F$-algebra $(X,e)$ to obtain the following chain.
\begin{equation}
\nonumber
\label{algebraIterated}
 \begin{tikzcd}
\cdots\arrow{r} M^{k}\otimes X \arrow{r}{M^{k-1}\otimes e} & M^{k-1}\otimes X \arrow{r} & \cdots  \arrow{r} & M\otimes X \arrow{r}{e} & X
\end{tikzcd}    
\end{equation}
\noindent For $s > r\geq 0$, we let $e_{rs}:M^{s}\otimes X \rightarrow M^{r}\otimes X$ be the composition of the maps $ M^{r}\otimes e$, $M^{r+1}\otimes e$,...,$M^{s-2}\otimes e$ and $M^{s-1}\otimes e$.
\begin{equation}
\nonumber
\label{esr}
e_{rs}=M^{r}\otimes e ~\circ~ M^{r+1}\otimes e ~\circ~ \cdots ~\circ~ M^{s-2}\otimes e ~\circ~ M^{s-1}\otimes e
\end{equation}

\noindent In particular, $e_{(r-1)r}=M^{r-1}\otimes e$. When $r=0$, $e_{0s}$ will be denoted simply by $e_s$. Thus, $e_s:M^s\otimes X\rightarrow X$ is the composition of the maps $e, M\otimes e$,...,$M^{s-1}\otimes e$.
\begin{equation}
\label{ek}
e_{s}=e\circ M\otimes e \circ \cdots  \circ M^{s-1}\otimes e
\end{equation}  

\noindent For $t>s>r\geq 0$, we have $e_{rt}=e_{rs}\circ e_{st}$.\\

The mediating morphism from the initial algebra $(G, g)$ can be computed explicitly as follows. Let $(X,e)$ be an algebra for $F$ on $\mathbf{Set_3}$, where $\,e\,$ is a function that preserves the distinguished points. For $x=[(m_{1}\otimes \cdots m_{k}\otimes d)]\in G$, where $d\in \{T,L,R\}$, let $\overline{x}=m_{1}\otimes \cdots m_{k}\otimes d_{X}$, where $d_{X}$ is the corresponding distinguished element of $X$. For instance if $d=T$, then $d_{X}=T_{X}$.

 \begin{lemma}
 \label{mediationMorAlgebra}
Let $\,(X,e)\,$ be an $F$-algebra on $\Set$. Then, the mediating morphism from $G$ to $X$, $f:G \longrightarrow X$,  is given by  
$$f\left(\left[\left(m_{1}\otimes \cdots m_{k}\otimes d\right)\right]\right)=e_k\left( m_{1}\otimes \cdots m_{k}\otimes d_X\right).$$
\end{lemma} 
}

\subsection{Cauchy Completion as an endofunctor on $\ms$}

Recall the operation of taking a metric space to its Cauchy completion.  
We write this as $C_0\colon \Met\to \Met$,
where $\Met$ again
is the category of $1$-bounded metric spaces and short maps.  
This operation extends to a functor on all of our categories.   
\begin{proposition}\cite[Lemma 13]{Bhat}
\label{prop-Cauchy}
$C_0$ lifts to a functor $C\colon \ms\to \ms$:
\[
\begin{tikzcd}
\ms \arrow{r}{C} \arrow{d}[swap]{U}&  \ms \arrow{d}{U}\\
\Met  \arrow{r}[swap]{C_0} &  \Met
\end{tikzcd}    
\]
Here $U$ is the forgetful functor, dropping
the tripointed structure.
Furthermore,
there is a natural transformation
$\eta\colon  C\o F \to F\o C$ whose components
are isomorphisms.
\end{proposition}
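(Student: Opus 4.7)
For part (i), given $(X,T_X,L_X,R_X)\in \ms$, the canonical isometric embedding $X \hookrightarrow C_0 X$ sends $T_X,L_X,R_X$ to three points still at mutual distance~$1$, so we designate them as the tripointed structure of $CX := C_0 X$. Any $\ms$-morphism $f\colon X \to Y$ extends uniquely to a short map $C_0 f\colon C_0 X \to C_0 Y$ by the standard density and uniform continuity argument, and preservation of the three distinguished points is inherited from $f$, since those points lie in $X \subseteq CX$. So $Cf := C_0 f$ gives the desired lift, and the commutativity $U\o C = C_0 \o U$ holds by construction.

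For part (ii), note that the inclusion $\iota_X\colon X \hookrightarrow CX$ is an isometric embedding in $\ms$, hence by Lemma~\ref{presFunctions}(\ref{isoemb}) so is $F\iota_X\colon FX \hookrightarrow F(CX)$. The key claim is that $F(CX) = M\otimes CX$ is itself already complete. Given a Cauchy sequence $(m_n \otimes y_n)$ in $M\otimes CX$, pass to a tail where either the letter $m_n$ is constant or at least two letters appear infinitely often. In the constant case, Lemma~\ref{LemmaMetricOnTensor} shows that the $y_n$ form a Cauchy sequence in $CX$ and converge to some $y$, giving the limit $m\otimes y$. In the switching case, the minimum-distance formula of Lemma~\ref{LemmaMetricOnTensor} forces the $y_n$ in each copy to approach the distinguished points identified by the gluing relation, so the sequence converges to the corresponding glued point in $FCX$. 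Either way the sequence converges, so $F(CX)$ is complete, and $F\iota_X$ extends uniquely by density and continuity to an isometric embedding $\eta_X \colon C(FX) \to F(CX)$. Surjectivity is easy: any $m\otimes y \in F(CX)$ with $y=\lim_n x_n$ and $x_n \in X$ is the image of the Cauchy class in $C(FX)$ of $(m\otimes x_n)$. Preservation of the tripointed structure holds by inspection (e.g.\ $T_{C(FX)} = a\otimes T_X = T_{F(CX)}$), so $\eta_X$ is an $\ms$-isomorphism.

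For naturality, given $f\colon X\to Y$ in $\ms$, both $\eta_Y \o CFf$ and $FCf \o \eta_X$ are short maps $C(FX)\to F(CY)$ that agree on the dense subset $FX \subseteq C(FX)$ (each sends $m\otimes x$ to $m\otimes f(x)$), hence they coincide everywhere. The main technical obstacle is the completeness claim for $F(CX)$: it requires the case analysis on Cauchy sequences in a quotient of a coproduct of metric spaces, carefully invoking Lemma~\ref{LemmaMetricOnTensor} to argue that switching copies along a Cauchy tail forces the coordinates in each copy to approach distinguished points that are identified under the gluing.
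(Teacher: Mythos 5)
The paper does not actually prove this proposition --- it is imported wholesale from \cite[Lemma 13]{Bhat}, with only a one-line remark about how the tripointed structure on $CX$ is chosen --- so there is no in-paper argument to compare against. Your proposal is a correct, self-contained proof, and it is the natural one: part (i) is routine, and for part (ii) the strategy of showing $F(CX)$ is complete and then extending the isometric embedding $F\iota_X\colon FX\to F(CX)$ over the completion, checking density for surjectivity, is exactly what one would expect.

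One small point in the completeness argument deserves to be made explicit. In your ``switching case'' you should observe that a Cauchy sequence $(m_n\otimes y_n)$ in $M\otimes CX$ can have at most \emph{two} letters recurring infinitely often: if, say, $a$, $b$, and $c$ all recurred, then the formula of Lemma~\ref{LemmaMetricOnTensor} applied to $a$--$b$ pairs forces the $a$-indexed $y_n$ to converge to $L$, while applied to $a$--$c$ pairs it forces them to converge to $R$, which is impossible since $d(L,R)=1$. With exactly two recurring letters the same formula (whose second term is bounded below by $\tfrac12$) pins each copy's coordinates to the pair of distinguished points identified by the gluing, so the limit is the single glued point, as you claim. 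With that observation inserted, the argument is complete; the naturality check by agreement on the dense subset $FX$ is also fine.
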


The extension of $C_0$ to $C$ is determined by the way that we give a tripointed structure to
the completion of (the underlying space of) a tripointed space:
we do this by using the constant sequences $(z,z, \ldots)$
of the distinguished points $z\in \{T, L, R\}$.

We need to know that  $C_0$ and $C$ preserve isometric isomorphisms in $\Met$ and $\ms$,
respectively.   These are 
the categorical isomorphisms in these categories, and every endofunctor preserves isomorphisms.

%We shall need a detail about how 

\begin{definition}
We write $(S,\psi\colon M\otimes S\to S)$ for the image under $C$ of the initial 
algebra.
In more detail, $S = M(G)$,
and 
$\psi = \eta_G \o g$, where 
$\eta_G \colon C(M\otimes G) \cong 
M\otimes C(G)$ is from Proposition~\ref{prop-Cauchy}.
\end{definition}

\begin{remark}
\label{remark-S}
Since $g$ is an isometric bijection and the completion functor $C$ preserves such maps,
it follows that $\psi$ is invertible.   We write $s:S\rightarrow M\otimes S$ for its inverse.
We shall need 
an explicit description of
this map $s$.
For this, let us fix some ordering $<$ of $M$, say $a < b < c$.
Given $(x_n)_n$,  we view every $x_n$ as $m_n\otimes x_n'$ for some $m_n$ (using equivalences in 
            (\ref{mustmake}) if necessary for the corners if $x_n$ just lives in $I$). 
            Choose $m$ which is the first via our ordering $<$ to appear infinitely many times, and 
            define $s((x_n)_n) = m\otimes (x_n')_n$.  
\end{remark}

\begin{theorem}
\label{finOnSet}
$(S, s)$ is the final coalgebra for $M\otimes-$ on $\ms$.
\end{theorem}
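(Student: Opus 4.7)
The plan is to construct, for an arbitrary coalgebra $(X,\xi)$ in $\ms$, a unique coalgebra morphism $h\colon(X,\xi)\to(S,s)$. Iterating $\xi$ gives short maps $\xi_n\colon X\to M^n\otimes X$. For each $x\in X$ and $n\ge 1$, pick a representative $\xi_n(x)=\mathbf{m}_n(x)\otimes y_n(x)$ with $\mathbf{m}_n(x)\in M^n$ and $y_n(x)\in X$, and set
\[
h_n(x)\;:=\;\mathbf{m}_n(x)\otimes T\;\in\;M^n\otimes I\;\subseteq\;G.
\]
Since $\xi_{n+1}=(M^n\otimes\xi)\circ\xi_n$, the string $\mathbf{m}_{n+1}(x)$ extends $\mathbf{m}_n(x)$ by one letter, so $h_n(x)$ and $h_{n+1}(x)$ share the prefix $\mathbf{m}_n(x)$ in $M^{n+1}\otimes I$; Lemma~\ref{totalybound} then gives $d_G(h_n(x),h_{n+1}(x))\le 2^{-n}$. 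Thus $(h_n(x))_n$ is Cauchy in $G$, and I set $h(x)\in S$ to be its class.

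Next I verify the required properties. The pair $(\mathbf{m}_n(x),y_n(x))$ is ambiguous precisely when $y_n(x)$ is a distinguished point of $X$, via the identifications $(a,L)\sim(b,T)$, etc.; any two legitimate choices differ only by such identifications, hence yield Cauchy-equivalent sequences and the same $h(x)$. Shortness of $h$ follows by combining the isometric embedding $M^n\otimes I\hookrightarrow M^n\otimes X$ induced by $I\hookrightarrow X$ (Lemma~\ref{presFunctions}), shortness of $\xi_n$, and Lemma~\ref{totalybound}: for every $x,x'\in X$,
\[
d_{M^n\otimes X}(h_n(x),h_n(x'))\;\le\;d_{M^n\otimes X}(\xi_n(x),\xi_n(x'))+2\cdot 2^{-n}\;\le\;d_X(x,x')+2^{1-n}.
\]
Using Remark~\ref{isometricembeddingintoG} and letting $n\to\infty$ gives $d_S(h(x),h(x'))\le d_X(x,x')$. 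Preservation of the distinguished points follows from $\xi$ being tripointed: for instance $\xi(T_X)=a\otimes T_X$ forces $\mathbf{m}_n(T_X)=a^n$, and $a^n\otimes T$ is identified with $T_G$ in the colimit $G$.

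To see that $h$ is a coalgebra morphism, write $\xi(x)=m_1\otimes x'$; then $\mathbf{m}_n(x)=m_1\cdot\mathbf{m}_{n-1}(x')$ and therefore $h_n(x)=m_1\otimes h_{n-1}(x')$. The explicit description of $s=\psi^{-1}$ in Remark~\ref{remark-S} then reads off $m_1$ as the first letter of every term and yields
\[
s(h(x))\;=\;m_1\otimes h(x')\;=\;(M\otimes h)(\xi(x)).
\]
For uniqueness, suppose $h'\colon(X,\xi)\to(S,s)$ is any coalgebra morphism. Iterating the commuting square and inverting gives, for every $n$,
\[
h'(x)\;=\;\psi_n\!\bigl(\mathbf{m}_n(x)\otimes h'(y_n(x))\bigr),
\]
where $\psi_n\colon M^n\otimes S\to S$ is the $n$-fold iterate of $\psi$. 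Since $\psi$ is an isometric isomorphism (as $\tau$ is and $C$ preserves such maps), so is $\psi_n$; applying Lemma~\ref{totalybound} in $S$, any two values $\psi_n(\mathbf{m}_n(x)\otimes z_1)$ and $\psi_n(\mathbf{m}_n(x)\otimes z_2)$ lie within $2^{-n}$ in $S$. Hence $h'(x)$ is determined to within $2^{-n}$ by $\mathbf{m}_n(x)$, which depends only on $(X,\xi)$ and $x$; letting $n\to\infty$ forces $h'=h$.

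The main obstacle is handling the non-uniqueness of the address $\mathbf{m}_n(x)$ at ``corner'' points, which corresponds to the familiar fact that some points of the Sierpinski gasket have more than one address in $M^\omega$. Each of the steps above has to be verified to be independent of the choice of representative, which amounts to checking that alternative addresses are always identified at a deeper level in the colimit $G$.
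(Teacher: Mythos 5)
Your construction is essentially the one the paper uses: your $h_n$ is the paper's $\theta_n$ (with $z=T$), the Cauchy estimate and independence of the choice of address are Lemmas~\ref{cauchySeq} and~\ref{indpend}, the commuting square via the explicit description of $s$ in Remark~\ref{remark-S} is Lemma~\ref{defOfMediatingMor}, and your shortness estimate
$d_{M^n\otimes X}(h_n(x),h_n(x'))\le d_X(x,x')+2^{1-n}$ is exactly Proposition~\ref{shortOff}. The one place you genuinely depart from the paper is uniqueness: the paper (Lemma~\ref{lemma-mustbe}) forgets the metric and invokes the previously established fact that $(S,s)$ is final in $\set$, whereas you give a self-contained metric argument, iterating the coalgebra square to get $h'(x)=\psi_n(\mathbf{m}_n(x)\otimes h'(y_n(x)))$ and using that $\psi_n$ is an isometry together with Lemma~\ref{totalybound} to pin $h'(x)$ down to within $2^{-n}$. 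Your version has the advantage of not importing the $\set$-level finality result (so the metric proof stands on its own); the paper's is shorter but leans on prior work. One small point to tighten: you should choose the representatives $\mathbf{m}_n(x)\otimes y_n(x)$ coherently (inductively, as the paper does for $\chi_n$) so that $\mathbf{m}_{n+1}(x)$ really does extend $\mathbf{m}_n(x)$; your subsequent discussion of ambiguous addresses then covers the remaining freedom.
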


\begin{theorem} [\cite{Bhat}]
$(S, s)$ is bilipschitz isomorphic to $(\s,\sigma)$.
That is,  there is a bijection between these spaces which
is bilipschitz.   The two spaces are thus isomorphic
in $\mc$.  However, they are not isomorphic in $\ms$.
\end{theorem}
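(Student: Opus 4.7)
The plan is to exploit that the Sierpinski gasket itself carries an $F$-algebra structure in $\ms$.  The map $\tau_\s\colon M\otimes\s\to\s$ sending $m\otimes x$ to $\sigma_m(x)$ is short: within a single copy it halves distances exactly as the tensor metric does, and between copies the triangle inequality through a shared corner (e.g.\ $\sigma_a(L_\s)=\sigma_b(T_\s)$) gives $d_\s(\sigma_a(x),\sigma_b(y))\le \tfrac12 d_\s(x,L_\s)+\tfrac12 d_\s(T_\s,y)$, which is bounded by the first argument of the minimum in Lemma~\ref{LemmaMetricOnTensor}.  Thus $(\s,\tau_\s)$ is an $F$-algebra in $\ms$, and Theorem~\ref{theorem-ms-initial} together with Lemma~\ref{lemma-initiality} supplies a unique short algebra morphism
\[
\phi\colon G\to\s,\qquad \phi(m_0\otimes\cdots\otimes m_{n-1}\otimes z)=\sigma_{m_0}\circ\cdots\circ\sigma_{m_{n-1}}(z).
\]
Since $\s$ is complete, $\phi$ extends uniquely to a short map $\hat\phi\colon S\to\s$.

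The core step is to prove that $\hat\phi$ is bilipschitz.  Shortness is automatic; the task is the reverse inequality
\[
d_G(x,y)\;\le\;K\cdot d_\s\bigl(\phi(x),\phi(y)\bigr)
\]
for some absolute constant $K$.  I would establish this via a geometric separation lemma for the equilateral gasket: there is a constant $c>0$ such that whenever $d_\s(p,q)<c\cdot 2^{-n}$, the points $p,q$ lie in a common $n$-th generation subgasket $\sigma_{\mathbf m}(\s)$, or in two such subgaskets that share a corner.  Inside a common subgasket Lemma~\ref{totalybound} already bounds $d_G$ by $2^{-n}$, and the two-adjacent-subgaskets case adds a bounded multiple of $2^{-n}$ via the corner, yielding the linear estimate on $G$; it passes to $S$ by density and continuity.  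Surjectivity of $\hat\phi$ is immediate, since $\phi(G)$ is the (dense) set of vertices of all dyadic subtriangles and the image of a short map from a complete space is closed.

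Being bilipschitz, $\hat\phi$ is a homeomorphism, and one verifies from the recursive description of $s$ in Remark~\ref{remark-S} and the definition $\sigma=\tau_\s^{-1}$ that it intertwines $s$ and $\sigma$, so $(S,s)\cong(\s,\sigma)$ in $\mc$.  For the negative part, any $\ms$-isomorphism would be a corner-preserving isometry; but Lemma~\ref{LemmaMetricOnTensor} gives $d_S(L_S,a\otimes R)=1$, so $L_S$ has at least three distinct points of $S$ at distance exactly $1$, namely $T_S$, $R_S$, and $a\otimes R = c\otimes T$, whereas in $\s$ the only points at Euclidean distance $1$ from $L_\s=(0,0)$ are the two other corners $T_\s$ and $R_\s$.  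The principal obstacle is the geometric separation lemma underlying the bilipschitz lower bound: this is the step where the categorical and algebraic tools of the preceding sections no longer suffice and one must invoke the specific equilateral geometry of the gasket.
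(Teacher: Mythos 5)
The paper does not actually prove this theorem --- it is imported from \cite{Bhat} --- so I can only judge your argument on its own terms. Your overall route is the natural one and, as far as I can tell, the same one that must underlie the cited proof: realize the bijection as the completion $\hat\phi\colon S\to\s$ of the unique algebra morphism $\phi\colon (G,g)\to(\s,\tau)$, i.e.\ the address map $\mathbf m\otimes z\mapsto \sigma_{m_0}\circ\cdots\circ\sigma_{m_{n-1}}(z_\s)$. Your verification that $\tau$ is short is correct (each geodesic chain in $M\otimes\s$ maps to a Euclidean chain of the same length through the shared corners), and note this is consistent with Example~\ref{Sierpinski2}, which only shows that the \emph{inverse} $\sigma$ fails to be short. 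Your argument for non-isomorphism in $\ms$ is also correct and pleasantly concrete: $d_S(L_S,a\otimes R)=1$ by Remark~\ref{isometricembeddingintoG} and the path computation, so $L_S$ has three points of $S$ at distance exactly $1$, while in $\s\subseteq\mathbb{R}^2$ only the two other corners realize distance $1$ from $(0,0)$; no corner-preserving isometry can exist.

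The genuine gap is exactly where you locate it: the separation lemma (two level-$n$ cells of $\s$ either share a corner or are at Euclidean distance at least $c\cdot 2^{-n}$) is asserted, not proved, and it carries all of the non-categorical content of the theorem. It is true, but it needs an argument --- for instance, induction on the length of the longest common prefix of the two addresses, using that distinct children of a cell intersect in exactly one point together with a compactness/self-similarity rescaling; without this the lower Lipschitz bound, injectivity, and hence bijectivity of $\hat\phi$ are all unestablished. Two smaller repairs: (i) ``the image of a short map from a complete space is closed'' is false in general (consider $\arctan$ on $\mathbb{R}$); here you should instead use that $S$ is compact (it is complete and totally bounded by Lemma~\ref{totalybound}), or invoke the bilipschitz lower bound once proved, to conclude $\hat\phi(S)$ is closed and hence all of $\s$. (ii) When deducing the lower bound from the separation lemma you must handle the fact that a point of $G$ may have several level-$n$ addresses; the fix is that \emph{any} choice of level-$n$ cells containing $\phi(x)$ and $\phi(y)$ must intersect once their distance drops below $c\cdot 2^{-n}$, after which Lemma~\ref{totalybound} bounds $d_G(x,y)$ by $2^{1-n}$ through the shared corner. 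With those points supplied, the proof is complete, and in fact yields the sharper statement that $\hat\phi$ itself is short while only its inverse genuinely needs a Lipschitz constant larger than $1$.
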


\section{Tripointed Metric Spaces with Continuous Maps}
\label{section-mc}

\rem{
\begin{itemize}
    \item $G$ isn't the initial algebra, but $G_\rho$ is.
    \item Finding the final coalgebra relies on $S$ being the completion of $G$, however this is somewhat misleading as $G$ (with the metric whose completion we're using to find the final coalgebra) is no longer the initial algebra in this setting. 
    \item The case against $\mc$: morally, we're viewing $G$ more like we do in $\set$ without using any information from or in the metric setting (like the quotient metric or anything that makes sense) to get the initial algebra. 
    \item The case for $\mc$: we do precisely get the Sierpiniski gasket itself as the final coalgebra - but this is really only for the same reason as in $\set$, so in some sense this is not the RIGHT way to view these as metric structures, if the point is to capture some metric information.
    \item A note that this approach of the explicit description of the morphism from some arbitrary coalgebra $X$ to $S$ does work to obtain the previous results of $S$ being the final coalgebra in $\set$ and $\ms$. 
    \item We still use the fact that $G$ is the colimit of the initial chain and that $S$ is the completion of it to get that $S$ is the final coalgebra, but the downside is that $G$ is NOT the initial algebra in this setting, so we lose the connection between initial and final coalgebra. €
\end{itemize}
}

\subsection{Initial $F$-Algebra}
\label{section-initial-algebra-continuous}
\rem{
We saw in Theorem~\ref{theorem-ms-initial}
that $(G,g)$ is the initial algebra of $F$ in $\ms$, where $G$ is the colimit of
the initial sequence in $\set$ endowed with a canonical metric.
In this section, we will see that 
$(G,g)$
is not an initial algebra for $\mc$.  However, let $G_\rho$ denote $G$ with the discrete metric.  First observe that the quotient metric on $FG_\rho$ takes on values $\{0,\frac{1}{2}, 1\}$.  So in particular, the $\set$ morphism $g:FG_\rho\rightarrow G_\rho$ will be continuous: given $\epsilon>0$, by letting $\delta=\frac{1}{4}$, for $x,y\in FG_\rho$ we get $d_{FG_\rho}(x,y)<\delta\Rightarrow x=y$, so $d_{G_\rho}(g(x),g(y)) = 0<\epsilon$.  Thus, this is an $F$-algebra in $\mc$. 

\begin{proposition}\label{Grhoinitialalgebra} $(G_\rho, g)$ is the initial $F$-algebra in $\mc$. \end{proposition}
\begin{proof}
Given an $F$-algebra $(I, e:FI\rightarrow I)$ in $\mc$, we can view this as an algebra
for $F$ on $\set$, so we necessarily get a
unique $\set$ morphism $f:G_\rho \rightarrow I$ such that $e\circ Ff= f\circ g$.  

We show that $f$ is uniformly continuous.  Let $\delta = \frac{1}{2}$.
Take any $\eps > 0$.
Then for all $x,y\in G_\rho$ such that $d_{G_\rho}(x,y)<\delta$,
x$=y$, so $d_I(f(x),f(y)) =0<\epsilon$.  So $f$ is (uniformly) continuous. 
Moreover, $f$
preserves $\{T,L,R\}$, since it is a $\set$ morphism.  Thus, $f$ is a morphism in $\mc$.  

For uniqueness, consider a morphism $f':G_\rho\rightarrow I$ in $\mc$ such that $e\circ Ff' = f'\circ g$.  If we forget the metric, this is also a morphism in $\set$, so we must have $f=f'$ since $G=G_\rho$ is the initial algebra in $\set$.  
\end{proof}

The idea is that $(G_\rho,g)$ is an initial $F$-algebra in $\mc$ for the same reason $(G,g)$ is an initial $F$-algebra in $\set$.  We had to essentially forget any metric information we obtained from viewing $G$ as the colimit of the initial chain.  It just works out that the map obtained from $\set$ is continuous with respect to the discrete metric, making it a morphism in $\mc$.  So in some sense, working in a category of metric spaces
and continuous maps
 does not give us any additional information about this object.  And in fact,  $G$ with its colimit metric is not an initial algebra in $\mc$.  

\begin{corollary}\label{Gnotinitial} $(G,g)$ with the canonical (colimit) metirc is not the initial $F$-algebra in $\mc$.\end{corollary}
\begin{proof}
If $(G,g)$ with the canonical metric was also an initial $F$-algebra, there would be a $\mc$ isomorphism $f:G\rightarrow G_\rho$,  a continuous bijection with a continuous inverse.  However, we can find $x,y\in G$ such that $d_G(x,y)$ is arbitrarily small.  For example, consider $a\otimes\ldots\otimes a\otimes L$ and $a\otimes \ldots \otimes a \otimes R$ in $M^n\otimes I$.  The equivalence classes of these in $G$ are distance $\frac{1}{2^n}$ from each other, by
Lemmas~\ref{MetricOnTensor}
and~\ref{totalybound}\footnote{
LM: we should check these reasons.
You gave the reasons as 
$d_G([x],[y]) = d_{M^n\otimes I}(x,y)$ for $x,y\in M^n\otimes I$ and for $\overline{m}\otimes x,\overline{m}\otimes y$, $d_{M^n\otimes X}(\overline{m}\otimes x,\overline{m}\otimes y)\leq \frac{1}{2^n}$ for any object $X$

\textcolor{teal}{TN:I think the actual second reason is slightly broader, but follows - that in any $M^p\otimes X$, if $n\leq p$, and $\overline{m},\overline{m}'$ are in $M^p$ such that the first $n$ entries of $\overline{m}$ are equal to the first $n$ entries of $\overline{m}'$, then $d_{M^p\otimes I}(\overline{m}\otimes x,\overline{m}'\otimes y)\leq \frac{1}{2^n}$.}}.  Thus, for a fixed $\epsilon \in (0,1)$, for any $\delta>0$ there are $x,y\in G$ with  $0<d_G(x,y)<\delta$, but $d_{G_\rho}(f(x),f(y)) =1 >\epsilon$.  So no such $f$ can be continuous.  
\end{proof}
}

We reviewed the most common construction of
initial algebras for endofunctors in Section~\ref{section-initial-algebra-construction}.
Indeed, the initial algebra of $F$ on $\ms$
which we described there was obtained from
Ad\'amek's Theorem (Proposition~\ref{prop-Adamek}).
We now show that the same method works for 
$\mc$.  However, it turns out that the metric on the initial algebra in $\mc$ is discrete and therefore is uninteresting.
This somewhat disappointing result also suggests that the initial algebra of $F$ on $\mc$ will not be related to the final coalgebra in any interesting way.  
In $\ms$, we obtained the final coalgebra
of $F$ as the completion of the initial algebra.
The initial algebra in $\mc$ is already complete,
being discrete.  So in
 Section~\ref{section-final-coalgebra-mc} below,
we shall see that the final coalgebra on $\mc$
may be constructed by completing the initial algebra
on $\ms$ instead of $\mc$.

To apply Proposition~\ref{prop-Adamek},
the first step is
to take the colimit of the initial sequence
of the functor, repeated below:
\begin{equation}
\label{initialChainAgain}
\begin{tikzcd}
I \arrow{r}{!} & 
F I \arrow{r}{F !} &
F^{2} I  %\arrow{r}{F^{2}  !}  &
& \cdots   & 
 F^{j-1} I  \arrow{r}{F^{j-1} ! } &  
 F^{j}  I & 
 \cdots & 
 % \arrow{r}{F^{j-1} ! } &
\end{tikzcd}     
\end{equation}
$I$ again is the three-point
discrete metric space $\{T,L,R\}$.
Each $FI$ is a finite space.  
Every finite metric space 
$(X,d)$
is homeomorphic
to the space $(X,d_{disc})$,
where $d_{disc}$ is the discrete metric on $X$.
The homeomorphism is the identity function on $X$.
This bijection is invertible in the category with continuous maps (but not in the category with short maps).

\begin{lemma}
The colimit of 
(\ref{initialChainAgain}) in $\mc$ is 
 $G_{\rho} = (G,d_{disc})$.
 Moreover, the colimit maps 
$F^n I \to G_{\rho}$ are the same in $\set$ and $\mc$.
\end{lemma}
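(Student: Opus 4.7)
The plan is to verify directly that $(G_\rho, (c_n)_n)$ satisfies the universal property of the colimit of~(\ref{initialChainAgain}) in $\mc$, where each $c_n \colon F^n I \to G_\rho$ is the same underlying set-theoretic map as the colimit cocone for the corresponding chain in $\set$. The key observation driving everything is that each $F^n I$ is a finite metric space, so the metric topology on $F^n I$ is already discrete; equivalently, the identity $F^n I \to (F^n I, d_{disc})$ is an isomorphism in $\mc$ (though emphatically not in $\ms$). As a consequence, any set-function whose domain is some $F^n I$, or more generally whose domain carries the discrete metric, is automatically continuous into any $\mc$-target, because continuity from a discrete domain is trivial.

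With this observation the verification proceeds in three short steps. First, I would check that the proposed cocone is well-formed in $\mc$: each $c_n$ is continuous (automatic from the discrete-domain remark), preserves the distinguished points (inherited from the $\set$ cocone), and the compatibility $c_{n+1}\o F^n! = c_n$ holds at the level of sets, hence in $\mc$. Second, given an arbitrary $\mc$-cocone $(f_n\colon F^n I\to X)_n$, I would forget the metric structure to obtain a cocone in $\set$; the $\set$ colimit then supplies a unique set function $\phi\colon G\to X$ with $\phi\o c_n = f_n$. Third, I promote $\phi$ to a $\mc$-morphism $G_\rho\to X$, which is immediate: $\phi$ preserves the distinguished points because it agrees with $f_0$ on $I$, and it is continuous because its domain carries the discrete metric. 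Uniqueness in $\mc$ reduces to uniqueness in $\set$, and the ``moreover'' clause is built into the construction since the $c_n$ are by design the $\set$-colimit maps.

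The main subtlety, which I do not regard as a serious obstacle but which must be recorded, is a sanity check that $G_\rho$ really is the colimit in $\mc$ and not, say, $G$ equipped with the canonical metric inherited from the $\ms$ construction of Section~\ref{section-initial-algebra-construction}. This is indeed the case because the canonical metric places points like $a\otimes\cdots\otimes a\otimes L$ and $a\otimes\cdots\otimes a\otimes R$ (with $n$ copies of $a$) at distance $2^{-n}$, so the identity $(G, d_{can}) \to G_\rho$ fails to be continuous; hence $(G, d_{can})$ does not satisfy the universal property in $\mc$, while $(G_\rho, (c_n))$ does. Morally, continuous maps cannot detect the fine metric structure of the finite approximants $F^n I$, so the colimit collapses to the discrete metric, foreshadowing why the initial $F$-algebra in $\mc$ will be uninteresting as a metric object.
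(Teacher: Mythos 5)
Your proof is correct, but it takes a different route from the paper's. The paper's argument is purely formal: it invokes the adjunction $D\dashv U$ between $\mathbf{Set_3}$ and the tripointed metric spaces with continuous maps, where $D$ equips a set with the discrete metric, observes that the chain (\ref{initialChainAgain}) in $\mc$ is naturally isomorphic to the $D$-image of the corresponding chain in $\set$ (precisely because finite metric spaces are homeomorphic to discrete ones), and then concludes by the fact that left adjoints preserve colimits, so the colimit is $D(G)=G_\rho$. You instead verify the universal property of the colimit by hand: the cocone $(c_n)$ is continuous because each $F^nI$ is finite and hence topologically discrete; an arbitrary $\mc$-cocone forgets to a $\set$-cocone, yielding a unique mediating set map; and that map is automatically continuous out of $G_\rho$ and automatically unique in $\mc$ because uniqueness already holds in $\set$. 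Both arguments pivot on exactly the same two facts (finite metric spaces carry the discrete topology, and every map out of a discrete space is continuous), so the mathematical content coincides; the trade-off is that your direct verification is self-contained and makes the role of discreteness completely explicit, whereas the paper's adjunction argument is shorter once the adjunction is on the table and transfers verbatim to the Lipschitz setting in Section~\ref{section-ml} (as does yours, with ``continuous'' replaced by ``Lipschitz,'' since maps out of $1$-bounded discrete spaces are short, hence Lipschitz). Your closing sanity check that $(G,d_{can})$ fails the universal property is a nice touch and is consistent with the paper's later Remark that $G_\rho$ cannot be homeomorphic to $G$ with its canonical metric, which has no isolated points.
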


\begin{proof}
Recall that there is an adjunction $D\dashv U$
where $\mbox{\sf Met}$ is the category of
$1$-bounded metric spaces  
and continuous functions,
$D\colon \mbox{\bf Set}\to \mbox{\bf Met}$
gives the discrete metric, and $U$ is the forgetful functor.

This fact boils down to the assertion that
every $\mbox{\bf Set}$ function $f\colon X\to Y$
gives a continuous function $Df\colon DX\to DY$;
the continuity is immediate.
All of this is without the tripointed structure, but it is easy to see that we have the same adjunction when
we add back that additional structure.
Our point in the previous paragraph shows that
(\ref{initialChainAgain}) in $\mc$
is naturally isomorphic to the image
under the functor $D$ of 
(\ref{initialChainAgain}) in $\set$.
Since $D$ is a left adjoint, it preserves colimits.
This gives our result.
\end{proof}

\begin{proposition}
The initial algebra of $F\colon \mc\to\mc$   is
$(G_{\rho},g)$.
Moreover, for every algebra
$(A,\alpha)$ in $\mc$, the algebra morphisms
$G_{\rho}\to A$ are the same in $\set$ and $\mc$.
\label{prop-initial-alg-continuous}
\end{proposition}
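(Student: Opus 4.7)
The plan is to leverage the fact that $G_\rho$ carries the discrete metric to reduce initiality in $\mc$ to the already established initiality in $\set$. I would first prove the ``moreover'' clause, since it is the load-bearing observation. If $\phi\colon G_\rho\to A$ is any set function into a $1$-bounded metric space $A$, then choosing $\delta<1$ forces $d_{G_\rho}(x,y)<\delta$ to imply $x=y$, so $\phi$ is automatically continuous. Hence the algebra morphisms $G_\rho\to A$ in $\set$ and in $\mc$ are literally the same set of functions. This handles the second statement of the proposition and will be reused below.

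Next, I would verify that $(G_\rho,g)$ is genuinely an $F$-algebra in $\mc$, i.e., that $g\colon FG_\rho\to G_\rho$ is continuous. Using Lemma~\ref{LemmaMetricOnTensor}, a direct computation shows that the distances in $FG_\rho=M\otimes G_\rho$ take values only in $\{0,\tfrac{1}{2},1\}$: within a single copy one gets $\tfrac{1}{2}d_{G_\rho}(x,y)\in\{0,\tfrac{1}{2}\}$, and between distinct copies the minimum-path formula always yields $\tfrac{1}{2}$ times an integer in $\{0,1,2\}$. Thus every point of $FG_\rho$ is isolated (the open ball of radius $\tfrac{1}{4}$ is a singleton), so any function out of $FG_\rho$ is continuous, and $g$ in particular. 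For the initiality itself, given an $F$-algebra $(A,\alpha)$ in $\mc$, I would forget the metric and invoke the $\set$-version of Theorem~\ref{theorem-ms-initial} (which holds by the same proof and which the excerpt explicitly asserts) to obtain a unique $\set$-morphism $\phi\colon G\to A$ with $\phi\circ g=\alpha\circ F\phi$. By the first paragraph, $\phi$ is automatically a $\mc$-morphism $G_\rho\to A$, and uniqueness in $\mc$ follows from uniqueness in $\set$ by the same token.

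The main obstacle, if one can call it that, is really the preceding lemma identifying $G_\rho$ as the colimit of the initial chain in $\mc$; once that is available, the discreteness of $G_\rho$ trivializes every subsequent continuity check. An alternative route would be to go through Proposition~\ref{prop-Adamek} by showing that $F$ preserves this colimit, which reduces to checking that $g\colon FG_\rho\to G_\rho$ is a $\mc$-isomorphism: it is a $\set$-bijection, its forward continuity is the step above, and continuity of its inverse is automatic because the codomain $G_\rho$ is discrete. The direct argument via $\set$-initiality is cleaner, however, and has the advantage of delivering the ``moreover'' statement as a byproduct rather than as a separate verification.
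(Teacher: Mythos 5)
Your proof is correct, but it takes a genuinely different route from the paper's. The paper proves this proposition by first establishing (in the preceding lemma) that $G_{\rho}$ is the colimit of the initial chain in $\mc$, using the adjunction $D\dashv U$ between sets and metric spaces with continuous maps ($D$ being the discrete-metric functor, a left adjoint and hence colimit-preserving); it then invokes Ad\'amek's Theorem (Proposition~\ref{prop-Adamek}), checking via the same adjunction that $F$ preserves that colimit, and uses the characterization $g\circ Fc_i = c_{i+1}$ to identify the structure map. Your argument bypasses the colimit machinery entirely: you reduce directly to $\set$-initiality of $(G,g)$ by observing that every set function out of a discrete space into a $1$-bounded metric space is (uniformly) continuous, and that $FG_\rho$ is itself discrete (distances in $\{0,\tfrac12,1\}$ by Lemma~\ref{LemmaMetricOnTensor}), so $g$ is continuous and $(G_\rho,g)$ is an algebra in $\mc$. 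Both arguments are sound. Yours is more elementary and self-contained, and it yields the ``moreover'' clause as an immediate byproduct rather than as a separate assertion; it also does not depend on the lemma identifying the colimit, which you correctly flag as the only nontrivial input on the paper's route. The paper's adjunction argument buys something else: it simultaneously establishes that $G_\rho$ really is the colimit of the initial chain in $\mc$ (a statement of independent interest), and it transfers verbatim to $\ml$ in Section~\ref{section-ml}, whereas your direct argument would need the (equally easy) observation that functions out of discrete spaces are Lipschitz with constant $1$. Your side remark that $F$-preservation of the colimit ``reduces to'' $g$ being an isomorphism is stated a bit loosely --- the precise condition is that the canonical comparison map into $FG_\rho$ from the colimit of the shifted chain is invertible --- but since you only offer it as an alternative and do not rely on it, this does not affect the correctness of your proof.
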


\begin{proof}
To apply Proposition~\ref{prop-Adamek},
we must check that $F$ preserves the colimit
of (\ref{initialChainAgain}).
This holds in $\set$, and
so again the adjunction shows that it is preserved in
$\mc$.
We also need to see that the structure map
$g\colon FG\to G$ is the same in $\set$ and $\ms$.
For this, we need a the last assertion in 
Proposition~\ref{prop-Adamek}: 
$g$ in $\set$ is the unique
function so that $g\o F c_i = c_{i+1}$,
where $c_i \colon F^n I\to G$ is the colimit map.
This characterization in $\set$ immediately
applies in $\mc$ as well.
Thus $g$ is the algebra structure in $\mc$.
The same argument shows the last assertion in
our result.
\end{proof}

\begin{remark}
Now that we know that the carrier of the
initial algebra of $F$ on $\mc$ is a discrete space,
it follows that this same carrier space is not
$G$ with its metric as initial algebra in $\ms$.
The reason is that this latter space has no isolated
points, so it cannot be homeomorphic to a discrete space.
\end{remark}

\subsection{Final $F$-Coalgebra}
\label{section-final-coalgebra-mc}

What drives this paper is the fact that in several known settings, 
the final coalgebra turns out to be the completion of the initial algebra
(with the inverse structure).  This happens with bipointed metric and short maps and Freyd's functor (\cite{freyd:real}), where the initial algebra
is the dyadic rationals with the usual metric (\cite{AMM}).
It happens as well for tripointed sets and our functor $F$ (\cite{Bhat}),
where the final coalgebra is indeed the completion of the initial algebra.
But this cannot happen with $\mc$, since the initial algebra is
discrete and hence trivially complete, and this space is easily seen
to not be the final coalgebra.

In this section we will show that 
the final coalgebra on $\mc$ is the completion of the
initial algebra of $F$ on $\ms$.
This same space was shown to be the final coalgebra of $F$ on $\ms$,
as we stated in Theorem~\ref{finOnSet}.
The work which we do here
for $\mc$ does not depend on the earlier result for $\ms$, and indeed
our work below
gives a new proof that the completion of $G$ is a 
final coalgebra on $\ms$.

Recall that we write
$(G,g)$ for the initial algebra on $\ms$, 
$S$ for the completion of $G$,
 and $s$ for the image under the
completion functor of $g^{-1}$.
So we have a coalgebra $(S,s)$ in $\ms$.
As we have seen, $s:S\rightarrow FS$ is a short map, so it is continuous.  Thus, we know $(S,s)$ is a coalgebra in $\mc$.  

Here we will give an explicit description of the required coalgebra morphism from an arbitrary coalgebra into $S$. 
Let $e: X \rightarrow M\otimes X = F(X)$ be a coalgebra for $F$ in $\mc$.
By iterating this coalgebra, we obtain the following chain.
\begin{equation}
\nonumber
\label{coalgebraChain}
\begin{tikzcd}
X \arrow{r}{e} & F X\arrow{r}{F e} &  \cdots F^{n-1}  X \arrow{r}{F^{n-1} e} & F^{n}\otimes X \arrow{r} & \cdots
\end{tikzcd}     
\end{equation}
For each $~x\in X$, we construct a sequence $\left(\chi_n \right)_{n\in \nn}$ such that 
\[ \chi_n = m_{0}\otimes m_{1}\otimes\cdots\otimes m_{n-1}  \otimes x_{n} \in M^n\otimes X
\]
by iterating $x$ along the above chain. Specifically, we let $\chi_0=x$ and define $\chi_n$ inductively by  
\[\chi_n = \ \left(M^{n-1} \otimes e \right) \left ( \chi_{n-1}\right).\]
Given $\chi_{n-1}=m_{0}\otimes m_{1}\otimes...\otimes m_{n-2}  \otimes x_{n-1}$, we have 
\[
\begin{array}{lcl}
\chi_n &  = & \ \left( M^{n-1} \otimes e \right) \left(m_{0}\otimes m_{1}\otimes\cdots\otimes m_{n-2}  \otimes x_{n-1}\right) \\
& = &m_{0}\otimes m_{1}\otimes...\otimes m_{n-2} \otimes m_{n-1}  \otimes x_{n},
\end{array}
\]
where $e(x_{n-1})=m_{n-1}\otimes x_n$.

For each sequence $\left(\chi_n \right)_{n\in \nn}$ associated with an $x\in X$, we define a corresponding sequence  $\left(\theta_{n}(x)\right)_{n\in\nn}$ in $G$ as follows. If $\chi_n = m_{0}\otimes m_{1}\otimes...\otimes m_{n-1}  \otimes x_{n}$, then we set 
\begin{equation}
\label{eq-theta}
    \theta_n(x) = m_{0}\otimes m_{1}\otimes...\otimes m_{n -1} \otimes z,
    \end{equation}
where $z \in \{T, L, R\}$ is fixed for all $n$.

 Note that there are several choices we have made while defining the sequence  $\left( \theta_n(x) \right)_{n\in\mathbb{N}}$. First we have to  choose representatives $m_0\otimes \cdots \otimes m_{n-1} \otimes x_n$ of $\chi_n$ and then we have also made a choice for $z \in \{ T, L, R\}$. However, we shall show that sequences $\left( \theta_n(x) \right)_{n\in\mathbb{N}}$ are Cauchy in $G$ and they have the same limit in $S$ for any given $x$ in $X$, independent of the choices we made.

We first show that the sequence $\left(\theta_n(x)\right)_{n\in\mathbb{N}}$ is a Cauchy sequence for every $x\in X$ and the choice of $N$ for any $\epsilon >0$ is independent of $x$.

\begin{lemma}
\label{cauchySeq}
For each $x\in X$, fix $m_0,m_1,\ldots \in M$ such that for all $n$, $\chi_n = m_0\otimes\ldots\otimes m_{n-1}\otimes x_n$ for some $x_n\in X$ and fix $z \in \{ T, L, R\}$ to define $\theta_n$. 
Then 
\begin{enumerate}
    \item 
    $\theta_n(x) = m_0\otimes\ldots\otimes m_{n-1}\otimes z$ for all $n$.
\item For every $\epsilon >0$, there is $N\in \mathbb{N}$ such that for all $x\in X$ and $p, q> N$, $d_G\left(\theta_p(x) , \theta_q(x) \right)<\epsilon$.     
\end{enumerate}
In particular, for every $x\in X$ the sequence $~(\theta_{n}(x))_{n\in\mathbb{N}}~$ is a Cauchy sequence in $G$.
\end{lemma}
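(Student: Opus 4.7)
The plan is to handle the two claims separately: (1) follows by a direct unwinding of the inductive construction of $\chi_n$, while (2) is the substantive part and will follow from Lemma~\ref{totalybound} together with the isometric embeddings provided by Remark~\ref{isometricembeddingintoG}.

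For claim (1), I would argue by induction on $n$. Once we fix a representative $\chi_{n-1} = m_0 \otimes \cdots \otimes m_{n-2}\otimes x_{n-1}$, the recursion $\chi_n = (M^{n-1}\otimes e)(\chi_{n-1})$ forces $\chi_n = m_0 \otimes \cdots \otimes m_{n-2}\otimes m_{n-1}\otimes x_n$ where $e(x_{n-1}) = m_{n-1}\otimes x_n$. Thus the prefix of length $n-1$ is preserved as we pass from $\chi_{n-1}$ to $\chi_n$, and definition (\ref{eq-theta}) then yields $\theta_n(x) = m_0\otimes\cdots\otimes m_{n-1}\otimes z$.

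For claim (2), the idea is to estimate $d_G(\theta_p(x),\theta_q(x))$ by placing both terms in a common ambient space where they share their first $\min(p,q)$ tensor factors. Assume $p < q$. Iterating the canonical map $!\colon I\to FI$ gives an isometric embedding $M^p\otimes I\hookrightarrow M^q\otimes I$ (by repeated application of Lemma~\ref{presFunctions}(\ref{isoemb})), and composing with Remark~\ref{isometricembeddingintoG} gives an isometric inclusion $M^q\otimes I\hookrightarrow G$. Under this embedding, $\theta_p(x)$ lands on $m_0\otimes\cdots\otimes m_{p-1}\otimes u$ where $u\in M^{q-p}\otimes I$ is the iterated $!$-image of $z$, while $\theta_q(x) = m_0\otimes\cdots\otimes m_{p-1}\otimes v$ for $v = m_p\otimes\cdots\otimes m_{q-1}\otimes z\in M^{q-p}\otimes I$. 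Applying Lemma~\ref{totalybound} with $n=p$ and the tripointed space $M^{q-p}\otimes I$ yields $d_{M^q\otimes I}(\theta_p(x),\theta_q(x))\leq 2^{-p}$, and by isometricity this equals $d_G(\theta_p(x),\theta_q(x))$.

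To conclude, given $\epsilon>0$, choose $N$ with $2^{-N}<\epsilon$. For all $p,q>N$ and all $x\in X$, the estimate gives $d_G(\theta_p(x),\theta_q(x))\leq 2^{-\min(p,q)}<\epsilon$, and this $N$ is independent of $x$ because the bound never references $x$ beyond determining which symbols $m_i$ appear. The only real subtlety, and the main bookkeeping hurdle, is the correct identification of $\theta_p(x)$ inside $M^q\otimes I$ via the iterated canonical inclusion coming from $!$; once that is in place, Lemma~\ref{totalybound} supplies the uniform bound at once, and the Cauchy conclusion in $G$ is immediate.
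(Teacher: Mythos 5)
Your argument is correct and matches the paper's own proof in essentially every respect: both establish part (1) by unwinding the recursion defining $\chi_n$, and both prove part (2) by rewriting $\theta_p(x)$ as an equivalent length-$q$ expression sharing its first $p$ tensor factors with $\theta_q(x)$, then invoking Remark~\ref{isometricembeddingintoG} and Lemma~\ref{totalybound} to get the $x$-independent bound $2^{-p} < 2^{-N} < \epsilon$. Your explicit identification of the representative via the iterated map $!$ is just a more detailed spelling-out of the equivalence $m_0\otimes\cdots\otimes m_{p-1}\otimes l_p\otimes\cdots\otimes l_{q-1}\otimes z' \sim m_0\otimes\cdots\otimes m_{p-1}\otimes z$ that the paper uses.
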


\begin{proof}
Let $\epsilon>0\,$ and choose $\,N\in\mathbb{N}$ such that $\,2^{-N} <\epsilon$. For $\,q>p>N$ and $x\in X$, consider $$d_{G}(\theta_{p}(x),\theta_{q}(x))=d_{G}(m_{0}\otimes \cdots m_{p-1} \otimes z,m_{0}\otimes  \cdots \otimes m_{q-1} \otimes z  ).$$

The right hand side of this is equal to  
\[ d_{G}(m_{0}\otimes \cdots m_{p-1}\otimes l_{p}\otimes\cdots  \otimes l_{q-1}\otimes z',m_{0}\otimes  \cdots \otimes m_{q-1} \otimes z )\]
where $m_{0}\otimes \cdots m_{p-1}\otimes l_{p}\otimes\cdots  \otimes l_{q-1}\otimes z'\sim m_{0}\otimes  \cdots \otimes m_{p-1} \otimes z$ in $G$. It now follows from 
Remark~\ref{isometricembeddingintoG} 
that $$d_{G}(m_{0}\otimes \cdots m_{p-1}\otimes l_{p}\otimes\cdots  \otimes l_{q-1}\otimes z',m_{0}\otimes  \cdots \otimes m_{q-1} \otimes z  )$$ is equal to $$d_{M^q\otimes I}(m_{0}\otimes \cdots m_{p-1}\otimes l_{p}\otimes\cdots  \otimes l_{q-1}\otimes z',m_{0}\otimes  \cdots \otimes m_{q-1} \otimes z)$$ which is less than or equal to $\frac{1}{2^p}$ (by Lemma~\ref{totalybound}). But  $\frac{1}{2^p}<\frac{1}{2^N}<\epsilon$. Thus we have $d_{G}(\theta_{p}(x),\theta_{q}(x)  )<\epsilon$.

\end{proof}

One can now define a function $\,f:X\rightarrow S\,$ by setting  $$f(x)=\displaystyle\lim_{n\rightarrow \infty}\theta_{n}(x).$$
It is here that we explicitly use the fact that $S$
a complete space: $f$ is defined because the limits exist.
We shall now show that the definition of $f$ does not depend on the choices we have made.
Consider two choices of representatives for the sequence $(\chi_n)$, say we have $m_0,m_1,\ldots\in M$ and $m_0',m_1',\ldots\in M$ such that $$\chi_n=m_{0}\otimes m_{1}\otimes\cdots\otimes m_{n-1}  \otimes x_{n}=m_{0}^{'}\otimes m_{1}^{'}\otimes\cdots\otimes m_{n-1}^{'}  \otimes x_{n}'$$ for some $x_n,x_n'\in X$. Let $(\theta_n(x))_{n\in\mathbb{N}}$ and $(\theta_n'(x))_{n\in\mathbb{N}}$ be the two sequences corresponding to these choices, using $z$ and $z'$ in $\{T,L,R\}$ respectively. 
Write $\mathbf{m_n}=m_{0}\otimes m_{1}\otimes\cdots\otimes m_{n-1}$ and $\mathbf{m'_n}=m_{0}^{'}\otimes m_{1}^{'}\otimes\cdots\otimes m_{n-1}^{'}$. First note that the only way to achieve this equality is view the equivalences in (\ref{mustmake}), so we must have $z_n, z_n' \in \{ L, T, R \}$ such that  $\mathbf{m_n}  \otimes z_n=\mathbf{m_n'} \otimes z_{n}'$ in $G$. Thus, we have 
\begin{align*}
d_{G}(\theta_n(x),\theta_n^{'}(x)) & = d_{G}(\mathbf{m_n}\otimes z , \mathbf{m_n'}\otimes z')\\
&\leq d_{G}(\mathbf{m_n}\otimes z,\mathbf{m_n}  \otimes z_n) + d_{G}(\mathbf{m_n'} \otimes z_{n}', \mathbf{m_n'}\otimes z')\\
    % &=& d_{G}([\mathbf{m}  \otimes \x_r],[m_{1}\otimes m_{2}\otimes\cdots\otimes m_n  \otimes d_n]]) + d_{G}([m_{1}^{'}\otimes m_{2}^{'}\otimes\cdots\otimes m_{n}^{'}  \otimes d_{n'}],[\mathbf{m}'  \otimes \x_r])\\
&\leq \frac{1}{2^{n}}+\frac{1}{2^{n}}=\frac{1}{2^{n-1}}
\end{align*}
by
Lemma~\ref{totalybound}.
Thus, we have $\displaystyle\lim_{n\rightarrow \infty}\theta_{n}(x) = \lim_{n\rightarrow \infty}\theta_{n}'(x)$, so the definition of $f$ does not depend on the choice of representatives of $\chi_n$ or the choice of $z\in \{T,L,R\}$. 

Hence, we have proved the following lemma.

\begin{lemma}
\label{indpend} Given $x\in X$, $\displaystyle\lim_{n\rightarrow \infty}\theta_{n}(x)$ is independent of the choice of representatives of $\left(\chi_n \right)_{n\in\mathbb{N}}$ and the choice $z\in \{ T, L, R\}$. 

In particular, $f:X\rightarrow S$ given by $f(x)=\displaystyle\lim_{n\rightarrow \infty}\theta_{n}(x)$ is well-defined.
\end{lemma}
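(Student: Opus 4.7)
The plan is to argue that any two sequences $(\theta_n(x))$ and $(\theta_n'(x))$ built from the same point $x\in X$ but different representative choices (both for the $\chi_n$ and for the boundary element $z$) stay within $2^{1-n}$ of each other in $G$, hence converge to the same point in the completion $S$.

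First I would fix two data sets: representatives $\mathbf{m_n}\otimes x_n$ and $\mathbf{m_n'}\otimes x_n'$ of $\chi_n$, and corner choices $z, z' \in \{T,L,R\}$. Because both tensors equal $\chi_n$ in $M^n\otimes X$, and the equivalence relation defining $M^n\otimes X$ is generated by the relations in (\ref{mustmake}), the identifications forcing $\mathbf{m_n}\otimes x_n = \mathbf{m_n'}\otimes x_n'$ must already ``cascade down'' through corner elements; concretely, they force the existence of $z_n, z_n' \in \{T,L,R\}$ with $\mathbf{m_n}\otimes z_n = \mathbf{m_n'}\otimes z_n'$ in $G$. This structural claim about how equivalences propagate is the conceptual crux and deserves to be stated carefully; once granted, the rest is a short estimate.

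Next I apply the triangle inequality along the chain
\[
\mathbf{m_n}\otimes z \ \longrightarrow\ \mathbf{m_n}\otimes z_n \ =\ \mathbf{m_n'}\otimes z_n' \ \longrightarrow\ \mathbf{m_n'}\otimes z',
\]
so that
\[
d_G(\theta_n(x),\theta_n'(x)) \;\leq\; d_G(\mathbf{m_n}\otimes z,\mathbf{m_n}\otimes z_n) + d_G(\mathbf{m_n'}\otimes z_n',\mathbf{m_n'}\otimes z').
\]
By Remark~\ref{isometricembeddingintoG} each summand is computed in the isometric copy $M^n\otimes I$, and by Lemma~\ref{totalybound} each is bounded above by $2^{-n}$. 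Hence $d_G(\theta_n(x),\theta_n'(x)) \leq 2^{1-n}$.

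Finally, since $S$ is the Cauchy completion of $G$ and both Cauchy sequences $(\theta_n(x))$ and $(\theta_n'(x))$ (as guaranteed by Lemma~\ref{cauchySeq}) are at distance tending to zero from each other, they determine the same element of $S$. Therefore $\lim_n \theta_n(x) = \lim_n \theta_n'(x)$ in $S$, and the map $f(x) := \lim_n \theta_n(x)$ is well-defined. The main obstacle is the combinatorial fact that coincidences in $M^n\otimes X$ restrict in a controllable way to coincidences at the corners $\{T,L,R\}$ in $G$; this relies on the fact that the identifications in (\ref{mustmake}) only involve corner elements, so any chain of equivalences linking $\mathbf{m_n}\otimes x_n$ to $\mathbf{m_n'}\otimes x_n'$ factors through such corner elements at the $n$-th level.
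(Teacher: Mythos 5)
Your proposal is correct and follows essentially the same route as the paper: the same triangle-inequality chain through corner elements $z_n, z_n'$ with $\mathbf{m_n}\otimes z_n = \mathbf{m_n'}\otimes z_n'$, the same appeal to Lemma~\ref{totalybound} for the two $2^{-n}$ bounds, and the same conclusion that the two Cauchy sequences are asymptotic and hence have a common limit in $S$. Your explicit flagging of the combinatorial claim — that coincidences in $M^n\otimes X$ must factor through the corner identifications of (\ref{mustmake}) — is the same step the paper invokes, just stated more candidly as the crux.
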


\begin{remark}
\label{uniCauchy}
Note that the statement of Lemma \ref{cauchySeq} means that the sequence of functions $\theta_n : X\rightarrow G$ is uniformly Cauchy.
\end{remark}
 
We shall now show that this well-defined function $f:X\rightarrow S$ is the required mediating morphism to the final coalgebra. First note that $f$ preserves $\{T,L,R\}$: for $T_X$, 
\[ \chi_n = a\otimes \ldots\otimes a\otimes T_X,\] so $\theta_n(T_X) = a\otimes\ldots\otimes a\otimes z$ for some $z\in \{T,L,R\}$.  This sequence has the same limit as $(a\otimes\ldots\otimes a\otimes T)$ regardless of our choice of $z$, which is $T_G$.
Similarly for $L_G$ and $R_G $.  So we need to show that $f$ is continuous, and that it is the unique morphism in $\mc$ which makes the square in (\ref{comSquareMead}) commute. \rem{It is not too hard to show that $f$ must be the only map from $X$ to $S$ for which the square (\ref{comSquareMead}) commutes.}

\begin{equation}
\label{comSquareMead}
\begin{tikzcd}
X \arrow{r}{e} \arrow{d}{f} &  M\otimes X \arrow{d}{M\otimes f}\\
S \arrow{r}{s} & M\otimes S
\end{tikzcd}     
\end{equation}

\begin{lemma}
\label{defOfMediatingMor}
The function $f:X\rightarrow S$, given by $f(x)=\displaystyle\lim_{n\rightarrow \infty}\theta_{n}(x)$, makes the square in (\ref{comSquareMead}) commute. 

\end{lemma}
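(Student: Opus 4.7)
The plan is to verify the set-theoretic identity $s(f(x)) = (M\otimes f)(e(x))$ for each $x\in X$. Rather than computing $s$ directly, I will use the fact that $s = \psi^{-1}$, where $\psi\colon M\otimes S \to S$ is the isomorphism $\eta_G \circ g$ from Proposition~\ref{prop-Cauchy}; concretely, $\psi(m\otimes (y_k)_k)$ is the Cauchy sequence $(m\otimes y_k)_k$ in $G$. The identity we want then reduces to showing $f(x) = \psi((M\otimes f)(e(x)))$.

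The first key step is to establish the recursive identity $\theta_n(x) = m_0 \otimes \theta_{n-1}(x_1)$ for $n\geq 1$, where $e(x) = m_0\otimes x_1$ for some chosen representative. Iterating along the coalgebra chain, writing $e(x_k) = m_k\otimes x_{k+1}$ for the representative we select at each stage, the construction of $\chi_n$ yields
\[
\chi_n(x) = m_0 \otimes m_1 \otimes \cdots \otimes m_{n-1} \otimes x_n, \qquad \chi_{n-1}(x_1) = m_1 \otimes \cdots \otimes m_{n-1} \otimes x_n,
\]
so the definition of $\theta$ in (\ref{eq-theta}), using any fixed $z\in\{T,L,R\}$ for both sequences, gives the claimed identity.

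Next, since $f(x_1)$ is represented by the Cauchy sequence $(\theta_k(x_1))_k$, we have $(M\otimes f)(e(x)) = m_0\otimes f(x_1)$, and hence
\[
\psi\bigl((M\otimes f)(e(x))\bigr) = \psi(m_0 \otimes f(x_1)) = (m_0 \otimes \theta_k(x_1))_k
\]
as an element of $S$. By the recursive identity, the sequence $(m_0\otimes \theta_k(x_1))_k$ is, up to an index shift, the defining sequence $(\theta_n(x))_n$ of $f(x)$; since index shifts of a Cauchy sequence share the same limit in the completion, these represent the same point of $S$. This gives $f(x) = \psi((M\otimes f)(e(x)))$, which rearranges to the desired commutativity of (\ref{comSquareMead}).

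The main obstacle is bookkeeping the choice of representatives: elements of $M\otimes X$ and of $G$ are equivalence classes, so writing $e(x)$ as $m_0\otimes x_1$ and reading off $\theta_n(x)$ are ambiguous operations. However, Lemma~\ref{indpend} already shows that $f(x)$ does not depend on any of these choices, so we are free to pick the consistent representatives produced by the iteration above, and no further subtlety arises.
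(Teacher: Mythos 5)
Your proof is correct and follows essentially the same route as the paper's: both hinge on the recursion $\theta_n(x) = m_0\otimes\theta_{n-1}(x_1)$ obtained by iterating the coalgebra from $x_1$, and both conclude by identifying $f(x)$ with $m_0\otimes f(x_1)$ under the structure map. The only cosmetic difference is that you verify $f(x)=\psi\bigl((M\otimes f)(e(x))\bigr)$ using the inverse $\psi$ of $s$, whereas the paper applies $s$ directly via its explicit description in Remark~\ref{remark-S}; these are the same computation read in opposite directions.
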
 

\begin{proof}
We need to show $(M\otimes f)\circ e = s\circ f$.  First, let $m_0,m_1,\ldots\in M$ be such that for every $n\in\mathbb{N}$, $\chi_n = m_0\otimes\ldots\otimes m_{n-1}\otimes x_n$ for some $x_n\in X$, and choose $z\in \{T,L,R\}$ to define $\theta_n(x)$.  Observe that $e(x) = m_0\otimes x_1$ and for all $n$, 
\[ m_0\otimes \ldots\otimes m_{n-1}\otimes x_n = m_0\otimes (M^{n-2}\otimes e)\circ\ldots\circ e (x_1).\] 
So for all $n$, $(M^{n-1}\otimes e)\circ\ldots\circ e(x_1) = m_1\otimes\ldots\otimes m_n\otimes x_{n+1}$.  Thus,  $\theta_n(x_1) = m_1\otimes\ldots\otimes m_{n}\otimes z$.

So $f(x) = \displaystyle{\lim_{n\rightarrow\infty}} m_0\otimes\ldots\otimes m_{n-1}\otimes z$ and $f(x_1) = \displaystyle{\lim_{n\rightarrow\infty}}m_1\otimes\ldots\otimes m_{n-1}\otimes z$.  Then, by
Remark~\ref{remark-S},
\begin{align*}
    s \left(f(x)\right) &=s \left(\displaystyle\lim_{n\rightarrow \infty}m_{0}\otimes\cdots\otimes m_{n-1}  \otimes z\right)\\
    &=m_0 \otimes \displaystyle\lim_{n\rightarrow \infty}m_{1}\otimes\cdots \otimes m_{n-1} \otimes z\\
    &=m_0 \otimes \displaystyle\lim_{n\rightarrow \infty}m_{1}\otimes\cdots \otimes m_{n} \otimes z\\
    &= m_0\otimes f(x_1)\\
    & = (M\otimes f)(m_0\otimes x_1)\\
    & = (M\otimes f)\circ e(x).
\end{align*}
\end{proof} 

\begin{lemma}
\label{contOff}
$f:X\rightarrow S$ is continuous, and 
is therefore a morphism in $\mc$. 
\end{lemma}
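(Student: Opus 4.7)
The plan is to exploit two ingredients in tandem: the uniform Cauchy-ness of the sequence $(\theta_n)$ recorded in Remark~\ref{uniCauchy}, and the continuity of the iterated coalgebra maps $e_N := (M^{N-1}\otimes e) \circ \cdots \circ e : X \to M^N \otimes X$, which follows by iterating Lemma~\ref{presFunctions}. Given $x_0 \in X$ and $\epsilon > 0$, I would first pick $N$ large enough that $d_S(\theta_N(x), f(x)) \leq \epsilon/3$ for every $x \in X$; this follows from Lemma~\ref{cauchySeq} by passing to the limit in $S$. A standard triangle-inequality argument will then reduce continuity of $f$ at $x_0$ to showing that, for $x$ sufficiently close to $x_0$, one can select representatives of $\chi_N(x)$ and $\chi_N(x_0)$ together with values $z_x, z_0 \in \{T,L,R\}$ making $d_G(\theta_N(x), \theta_N(x_0)) < \epsilon/3$.

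To produce such choices, I would fix some representation $\chi_N(x_0) = \mathbf{m}_0 \otimes y_0$ and use continuity of $e_N$ to obtain $\delta > 0$ with $d_{M^N \otimes X}(\chi_N(x), \chi_N(x_0)) < \eta$ whenever $d_X(x,x_0) < \delta$, where $\eta$ is a small constant to be determined. The explicit quotient-metric formula from Lemma~\ref{LemmaMetricOnTensor} forces that, at each of the $N$ tensor slots, either $\mathbf{m}_x$ and $\mathbf{m}_0$ agree there (and the corresponding factor inside is close), or a label switch occurs, in which case the $\min$ in the metric formula forces the relevant coordinates to lie near corner points identified by~(\ref{mustmake}). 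By matching the choices $z_x$ and $z_0$ to those corner identifications, the expressions $\mathbf{m}_0 \otimes z_0$ and $\mathbf{m}_x \otimes z_x$ become either equal in $G$ or close in $G$, the latter being controlled by the same path-length bound that is used in the proof of Lemma~\ref{cauchySeq} together with Lemma~\ref{totalybound}.

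The hard part will be the bookkeeping across multiple label switches when $N$ is large, since a small distance in $M^N \otimes X$ can be realized by a path traversing several corner identifications, and one must track consistent choices of the $z$'s across every boundary crossing. I would handle this by induction on $N$, using Lemma~\ref{LemmaMetricOnTensor} as the base case and peeling off the outermost tensor slot at each inductive step, so that the inductive hypothesis applies to the remaining $N-1$ factors of $X$ viewed inside $M^{N-1} \otimes X$. Once this local analysis is completed, continuity of $f$ at $x_0$ follows by the triangle inequality, and since $x_0$ was arbitrary and $f$ already preserves $T,L,R$ as observed just before the statement, $f$ is a morphism in $\mc$.
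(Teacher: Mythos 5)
Your first reduction is exactly right and matches the paper: use the uniform Cauchy bound from Lemma~\ref{cauchySeq} to replace $f(x)$ and $f(x_0)$ by $\theta_N(x)$ and $\theta_N(x_0)$ up to $\epsilon/3$, and then try to control $d\bigl(\theta_N(x),\theta_N(x_0)\bigr)$ using continuity of $e^N=(M^{N-1}\otimes e)\circ\cdots\circ e$. But the middle step as you set it up contains a genuine gap: everything you defer to ``the hard part'' --- matching the label words $\mathbf{m}_x$ and $\mathbf{m}_0$ slot by slot, choosing $z_x,z_0$ consistently with the corner identifications in~(\ref{mustmake}), and tracking paths in the quotient metric that may cross several identified corners across $N$ nested tensor levels --- is left as a plan (``I would handle this by induction on $N$\dots'') rather than carried out, and it is exactly the part of the argument where things can go wrong. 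In particular, for nearby $x$ and $x_0$ the label words of $\chi_N(x)$ and $\chi_N(x_0)$ will in general genuinely differ, so there need not exist any choice of $z_x,z_0$ making $\theta_N(x)$ and $\theta_N(x_0)$ equal or related by a single corner identification; you would have to prove a quantitative statement bounding $d_G(\mathbf{m}_x\otimes z_x,\mathbf{m}_0\otimes z_0)$ in terms of $d_{M^N\otimes X}(\chi_N(x),\chi_N(x_0))$, uniformly in $N$, and nothing in your sketch establishes this.

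The missing idea that makes all of this unnecessary is to compare $\theta_N(x)$ not with $\theta_N(x_0)$ but with $e^N(x)=\chi_N(x)$ itself. By construction $\theta_N(x)$ and $\chi_N(x)$ carry the \emph{same} label word $\mathbf{m}$ and differ only in the final slot ($z$ versus $x_N$), so Lemma~\ref{totalybound} gives $d\bigl(\theta_N(x),e^N(x)\bigr)\leq 2^{-N}$ immediately, with no analysis of label switches at all (the comparison takes place in $M^N\otimes X$, into which $M^N\otimes I$ embeds isometrically by Lemma~\ref{presFunctions}). Inserting $e^N(x)$ and $e^N(x_0)$ into the triangle inequality,
\[
d\bigl(\theta_N(x),\theta_N(x_0)\bigr)\ \leq\ 2^{-N}\ +\ d_{M^N\otimes X}\bigl(e^N(x),e^N(x_0)\bigr)\ +\ 2^{-N},
\]
the middle term is controlled directly by the continuity of $e^N$, and choosing $N$ with $2^{-N}$ small finishes the proof. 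This is the route the paper takes; your proposal has all the right ingredients (Lemma~\ref{cauchySeq}, Lemma~\ref{totalybound}, continuity of $e^N$) but deploys Lemma~\ref{totalybound} against the wrong pair of points, which is what creates the unresolved combinatorial difficulty.
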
 

\begin{proof}
We will prove the continuity of $f$ at $x \in X$. Let $\,\epsilon >0\,$ be arbitrary. From Lemma \ref{cauchySeq}, there is  $N=N(\epsilon)$ such that for all $y\in X$ and $p\geq q> N$,  $d_S(\theta_p(y), \theta_q(y))<\epsilon/8$. By taking the limit as $p\rightarrow \infty$ we have $$d_S(f(y), \theta_q(y))<\dfrac{\epsilon}{8}$$ for all $y\in X$ and $q>N$.

Let $q$ be such that $q>N$ and $\frac{1}{2^q}<\frac{\epsilon}{4}$.
Since $q>N$, it follows that for any $y\in X$
\begin{align*}
    d_S(f(x),f(y)) &\leq d_S(f(x), \theta_q(x))+d_S(\theta_q(x), \theta_q(y))+d_S(\theta_q(y), f(y))\\
    &< d_S(\theta_q(x), \theta_q(y)) +\dfrac{\epsilon}{4}.
\end{align*} 

Let
\begin{equation}
    \label{eq}
    e^q=(M^{q-1}\otimes e)\circ \cdots \circ (M\otimes e)\circ e
\end{equation}
Since $e^q$ is continuous by Lemma~\ref{presFunctions}, there is $\delta=\delta(\epsilon) >0$ such that 
 $~d_{M^{q}\otimes X }(e^{q}(x),e^q(y))<\frac{\epsilon}{4}$ whenever $d(x,y)<\delta$.
 
Let $y\in X$ be such that $d(x,y)<\delta$. 
Note that, by the way we have defined $\theta_q$, by Lemma~\ref{totalybound} $$d(\theta_q(x),e^q(x)) < \frac{1}{2^q}<\frac{\epsilon}{4}\ \mathrm{and}\  d(\theta_q(y),e^q(y)) < \frac{1}{2^q}<\frac{\epsilon}{4}.$$ 

Thus, we get
\[
\begin{array}{rcl}
d_S(f(x),f(y)) & <& \frac{\epsilon}{4} + d_S(\theta_q(x),\theta_q(y))\\
& < & \frac{\epsilon}{4} + d_S(\theta_q(x),e^q(x)) + d_S(e^q(x),e^q(y)) + d_S(e^q(y),\theta_q(y))\\
& < & \frac{\epsilon}{4}+\frac{\epsilon}{4}+\frac{\epsilon}{4}+\frac{\epsilon}{4}\\
& = & \epsilon\\
\end{array}
\]
\end{proof}

\rem{
\begin{remark} In fact, $f$ is uniformly continuous.\footnote{Put in a little bit of explanation.  If we know that $e$ is (uniformly?) continuous then we can use the fact that $M\otimes-$ preserves uniform continuity? So $e^q$ is uniformly continuous? Jay: I agree.
LM: I might back off the claim that we preserve uniform
continuity.  I need to think about this.}
\end{remark}
}

\begin{lemma} 
\label{lemma-mustbe}
$f$ is the unique morphism in $\mc$ which makes the diagram in (\ref{comSquareMead}) commute.\end{lemma}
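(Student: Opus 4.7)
The plan is to iterate the commuting square~(\ref{comSquareMead}), use that the inverse $\psi$ of $s$ is short in $\ms$, and then invoke the $2^{-n}$ contraction of Lemma~\ref{totalybound} to conclude that any candidate $f'$ agrees with $f$ at every point.

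Concretely, I would first show by induction on $n$, using functoriality of $M\otimes-$ together with (\ref{comSquareMead}), that any morphism $f' : X \to S$ making the square commute also satisfies the iterated identity
\[ s^n \circ f' \;=\; (M^n \otimes f') \circ e^n \]
for every $n$, where $e^n$ is as in (\ref{eq}) and $s^n$ is the analogous $n$-fold iterate of $s$. Evaluating at $x \in X$ and using $e^n(x) = \chi_n = m_0\otimes\cdots\otimes m_{n-1}\otimes x_n$ yields
\[ s^n(f'(x)) \;=\; m_0 \otimes \cdots \otimes m_{n-1} \otimes f'(x_n). \]

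Next, write $\psi = s^{-1}$ as in Remark~\ref{remark-S}. Since $\psi$ is an isomorphism in $\ms$ it is short, and Lemma~\ref{presFunctions} ensures that each $M^k\otimes\psi$ is short, so the composite $\psi^n : M^n\otimes S \to S$ is short and satisfies $\psi^n\circ s^n = \id_S$. Applying $\psi^n$ to the display above therefore gives $f'(x) = \psi^n(m_0\otimes\cdots\otimes m_{n-1}\otimes f'(x_n))$. On the other hand, because $\psi$ extends the algebra structure $g$ used to build $G$, the same map sends $m_0\otimes\cdots\otimes m_{n-1}\otimes z$ (with $z \in I \subset G \subset S$) to $\theta_n(x) \in G \subset S$. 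Combining shortness of $\psi^n$ with Lemma~\ref{totalybound} then produces the key estimate
\[ d_S(f'(x),\,\theta_n(x)) \;\leq\; d_{M^n\otimes S}\bigl(m_0\otimes\cdots\otimes f'(x_n),\; m_0\otimes\cdots\otimes z\bigr) \;\leq\; 2^{-n}. \]
Letting $n\to\infty$ gives $f'(x) = \lim_n \theta_n(x) = f(x)$, which is the desired uniqueness.

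The main obstacle I anticipate is the bookkeeping needed to identify $\theta_n(x)$ with $\psi^n(m_0\otimes\cdots\otimes m_{n-1}\otimes z)$ inside $S$: this requires tracing the isometric embedding $G \hookrightarrow S$ through the colimit construction of Section~\ref{section-initial-algebra-construction} and observing that $\psi$ restricts to $\tau = g$ on $G$. Once that identification and the shortness of $\psi^n$ are in hand, the argument reduces to a one-line telescoping estimate. It is worth noting that continuity of $f'$ plays no explicit role; the algebraic constraint from the commuting square together with the geometric contraction of Lemma~\ref{totalybound} already force $f' = f$ pointwise.
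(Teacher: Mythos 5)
Your proof is correct, but it takes a genuinely different route from the paper's. The paper disposes of uniqueness in two lines: forgetting the metric turns (\ref{comSquareMead}) into a diagram in $\set$, and since $(S,s)$ is already known to be the final coalgebra there, any two $\mc$-morphisms making the square commute must coincide. Your argument instead re-derives the uniqueness from scratch: iterating the square to get $s^n\circ f'=(M^n\otimes f')\circ e^n$, inverting via the short (indeed isometric) map $\psi^n$, identifying $\psi^n(m_0\otimes\cdots\otimes m_{n-1}\otimes z)$ with $\theta_n(x)$ inside $G\subseteq S$, and squeezing with the $2^{-n}$ bound of Lemma~\ref{totalybound}. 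Each of these steps checks out (the shortness of $\psi^n$ follows from Remark~\ref{remark-S} and Lemma~\ref{presFunctions}(iii), and the identification with $\theta_n(x)$ is exactly the ``prepending'' description of the algebra structure on $G$). What your approach buys is self-containedness: it does not lean on the previously established $\set$-finality of $(S,s)$, and in fact it simultaneously proves the uniqueness half of that result, since, as you observe, continuity of $f'$ is never used. What the paper's approach buys is brevity and a clean separation of concerns: the metric content of the lemma is zero, so it is natural to delegate the whole thing to the underlying category of sets. Either proof would be acceptable here; yours is the one to use if one wants the $\mc$ development to stand independently of the earlier literature.
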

\begin{proof}
By forgetting the metric, the diagram in (\ref{comSquareMead}) is a diagram in $\set$, so if there are two morphisms in $\mc$ which make this diagram commute, these are morphisms in $\set$, so they must be equal since $(S,s)$ is the final coalgebra in $\set$. 
\end{proof}

Thus, we have the following: 

\begin{theorem}
$(S, s)$ is the final coalgebra for $F$ on  $\mc$.
\end{theorem}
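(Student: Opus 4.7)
The plan is to assemble the preceding lemmas into the statement that $(S,s)$ is final in $\mc$. First I would note that $(S,s)$ is indeed an object of the category of coalgebras in $\mc$: the structure map $s\colon S \to M\otimes S$ is short by construction (being obtained from $g^{-1}$ via the completion functor on $\ms$), and short maps are continuous, so $s$ is a morphism in $\mc$.

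Next, given an arbitrary $F$-coalgebra $(X,e)$ in $\mc$, I would exhibit the required mediating morphism. The candidate is the map $f\colon X\to S$ defined pointwise by $f(x) = \lim_{n\to\infty}\theta_n(x)$. The existence of the limit in $S$ is exactly where the completeness of $S$ is used, together with the Cauchy property of the sequences $(\theta_n(x))_{n\in\mathbb{N}}$ established in Lemma~\ref{cauchySeq}. Lemma~\ref{indpend} says the limit does not depend on the choices of representatives $m_0,m_1,\ldots$ for the tensor factorizations of each $\chi_n$, nor on the choice of $z\in\{T,L,R\}$, so $f$ is well-defined. Lemma~\ref{defOfMediatingMor} then verifies that $f$ satisfies the coalgebra square $(M\otimes f)\circ e = s\circ f$, and Lemma~\ref{contOff} verifies that $f$ is continuous. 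Since $f$ sends $T_X,L_X,R_X$ to $T_S,L_S,R_S$ (as noted just before Lemma~\ref{defOfMediatingMor}), $f$ is a morphism in $\mc$.

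For uniqueness, I would invoke Lemma~\ref{lemma-mustbe}: any two $\mc$-morphisms $X\to S$ making the coalgebra square commute are, after forgetting the metric, two coalgebra morphisms in $\set$, and finality of $(S,s)$ in $\set$ (recorded in the first row of Figure~\ref{fig-results}) forces them to coincide.

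Since every step has already been carried out in the lemmas, no new obstacle remains. The substantive difficulties were already handled earlier: the uniform Cauchyness used in Lemma~\ref{cauchySeq}, the $\epsilon/4$-decomposition used for continuity in Lemma~\ref{contOff} (which required the continuity of the iterate $e^q$, itself obtained from Lemma~\ref{presFunctions}), and the appeal to finality in $\set$ for uniqueness. The proof of the theorem is therefore a one-line citation of these four lemmas together with the observation that $(S,s)$ is a coalgebra in $\mc$.
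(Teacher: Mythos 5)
Your proposal is correct and follows exactly the paper's route: the theorem is obtained by assembling Lemmas~\ref{cauchySeq}, \ref{indpend}, \ref{defOfMediatingMor}, \ref{contOff}, and \ref{lemma-mustbe}, together with the observation that $s$ is short and hence continuous, so $(S,s)$ is a coalgebra in $\mc$. Nothing is missing.
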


We have seen that the Sierpinski Gasket, $\s$ with $\sigma: \s\rightarrow F\s$ is a final coalgebra in $\set$, however $\sigma$ is not a short map, so this is not the final coalgebra in $\ms$.  But we know that $S$ and $\s$ are bilipschitz equivalent, so in particular, there is a a 
Lipschitz bijection between them whose inverse is also Lipschitz.  Thus, there is an isomorphism in $\mc$ between $S$ and $\s$.

\begin{theorem} $(\s,\sigma)$ is the final $F$-coalgebra in $\mc$.\end{theorem}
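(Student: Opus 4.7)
The plan is to deduce finality of $(\SG,\sigma)$ from the finality of $(S,s)$ (which was just established) together with the bilipschitz equivalence between $S$ and $\SG$ coming from the theorem quoted at the end of Section~3. In general, any coalgebra that is coalgebra-isomorphic to a final coalgebra is itself final, so the entire task reduces to exhibiting a coalgebra isomorphism $(S,s) \cong (\SG,\sigma)$ in $\mc$.

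First I would observe that $(\SG,\sigma)$ is an $F$-coalgebra in $\mc$: this is noted in Example~\ref{ex-sigma} and follows from the fact that $\tau$ (and hence $\sigma = \tau^{-1}$) is continuous as a map of tripointed metric spaces, even though $\tau$ is not short. Next I would recall that in $\set$ both $(S,s)$ and $(\SG,\sigma)$ are final $F$-coalgebras, and hence there is a unique $\set$-coalgebra isomorphism $h \colon S \to \SG$.

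The key input is the previously stated theorem from \cite{Bhat} that $h$ is bilipschitz. A bilipschitz bijection is continuous with continuous inverse, so $h$ is an isomorphism in $\mc$. Moreover, being a coalgebra morphism is a purely diagrammatic condition that does not depend on whether one works in $\set$, $\mc$, or $\ms$: the identity $Fh \circ s = \sigma \circ h$ holds in $\set$, and the functor $F$ acts the same on the underlying $\set$-morphism, so the same identity holds in $\mc$. Therefore $h \colon (S,s) \to (\SG,\sigma)$ is a coalgebra isomorphism in $\mc$.

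Finally, I would invoke the standard categorical fact that final coalgebras transport along coalgebra isomorphisms: given any coalgebra $(X,e)$ in $\mc$, the unique coalgebra morphism $g \colon (X,e) \to (S,s)$ (which exists by the theorem just proved) yields a coalgebra morphism $h \circ g \colon (X,e) \to (\SG,\sigma)$, and any coalgebra morphism $(X,e) \to (\SG,\sigma)$ composed with $h^{-1}$ gives a coalgebra morphism into $(S,s)$, so uniqueness transfers as well. I do not expect a genuine obstacle here; the only point that requires some care is verifying that the $\set$-theoretic bijection coming from finality in $\set$ is \emph{the same} map as the bilipschitz equivalence supplied by \cite{Bhat}, which follows from the uniqueness of coalgebra morphisms into the final coalgebra in $\set$.
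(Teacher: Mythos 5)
Your proposal is correct and follows essentially the same route as the paper: the paper also deduces finality of $(\s,\sigma)$ in $\mc$ from the finality of $(S,s)$ together with the bilipschitz equivalence of $(S,s)$ and $(\s,\sigma)$ cited from \cite{Bhat}, which gives an isomorphism in $\mc$ along which finality transports. Your additional care in checking that the bilipschitz bijection is a coalgebra morphism is a reasonable elaboration of a point the paper leaves implicit.
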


\subsection{$\ms$}
It has been observed in \cite{Bhat} that $(S, s)$ is the final coalgebra for $F$ on $\ms$. Our main goal here is to see how this observation follows from the above discussion. We need to show that if $(X, e)$ is a coalgebra with $e$ being a short map, then the mediating morphism $f:X\rightarrow S$ is also a short map. This follows from the proof of Lemma \ref{contOff} with some minor modifications.

\begin{proposition}
\label{shortOff}
If $(X, e)$ is a coalgebra for $F$ on $\ms$ (i.e., $e$ is a short map), then the mediating morphism $~f:X\rightarrow S$ is also a short map.
\end{proposition}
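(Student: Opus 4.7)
The plan is to adapt the proof of Lemma~\ref{contOff} by replacing the $\epsilon/\delta$ argument at the step involving $e^q$ with a direct shortness estimate. The key observation is that since $e\colon X \to FX$ is short, Lemma~\ref{presFunctions}(\ref{shrt}) tells us that each iterate
\[ e^q = (M^{q-1}\otimes e)\circ\cdots\circ(M\otimes e)\circ e\colon X \to M^q\otimes X \]
is short as well, being a composition of short maps. So without any appeal to $\delta$, we immediately have $d_{M^q\otimes X}(e^q(x), e^q(y)) \leq d_X(x,y)$ for all $x,y\in X$ and all $q$.

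The core of the argument is then to estimate $d_S(f(x), f(y))$ by inserting $\theta_q(x)$ and $\theta_q(y)$ and then $e^q(x)$ and $e^q(y)$ via the triangle inequality. First apply the triangle inequality to get
\[ d_S(f(x),f(y)) \leq d_S(f(x),\theta_q(x)) + d_S(\theta_q(x),\theta_q(y)) + d_S(\theta_q(y),f(y)), \]
and then bound the middle term by
\[ d_S(\theta_q(x),\theta_q(y)) \leq d_S(\theta_q(x),e^q(x)) + d_S(e^q(x),e^q(y)) + d_S(e^q(y),\theta_q(y)). \]
By Lemma~\ref{totalybound}, the outer two terms here are each at most $2^{-q}$, since $\theta_q(w)$ and $e^q(w)$ agree on the first $q$ tensor factors for every $w\in X$. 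By shortness of $e^q$, the middle term is at most $d_X(x,y)$. Combining, we obtain
\[ d_S(f(x),f(y)) \leq d_S(f(x),\theta_q(x)) + d_S(f(y),\theta_q(y)) + 2\cdot 2^{-q} + d_X(x,y). \]

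Finally, let $q\to \infty$. By Lemma~\ref{cauchySeq} (in the uniform form noted in Remark~\ref{uniCauchy}), the sequence $\theta_q(w)\to f(w)$ in $S$ for every $w\in X$, so the two terms $d_S(f(x),\theta_q(x))$ and $d_S(f(y),\theta_q(y))$ tend to $0$; the term $2\cdot 2^{-q}$ also tends to $0$. We conclude $d_S(f(x),f(y)) \leq d_X(x,y)$, as desired. There is no serious obstacle here beyond realizing that the role of $\delta$ in Lemma~\ref{contOff} is taken over, in the short setting, by the shortness of $e^q$, and that the previously-proved uniform Cauchy estimate on the $\theta_q$ sequences lets us pass to the limit cleanly.
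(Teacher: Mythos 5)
Your proof is correct and follows essentially the same route as the paper's: both insert $\theta_q(x),\theta_q(y)$ and then $e^q(x),e^q(y)$ via the triangle inequality, bound the outer terms by $2^{-q}$ using Lemma~\ref{totalybound}, invoke Lemma~\ref{presFunctions}(\ref{shrt}) for the shortness of $e^q$, and conclude by letting $q\to\infty$ (the paper phrases this as ``since $\epsilon$ is arbitrary,'' which is the same limiting argument). No substantive difference.
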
 

\begin{proof}
We modify the proof of Lemma \ref{contOff}.
Let $x,y\in X$. Let $\,\epsilon >0\,$ be arbitrary. Then there is some $q$ such that $2^{-q}<\frac{\epsilon}{3}$ and $ d_S(f(x),f(y)) < d_S(\theta_q(x), \theta_q(y)) +\frac{\epsilon}{3}$.
Note that $e^q$ from (\ref{eq})
is also a short map (by Lemma~\ref{presFunctions}) and hence  
\[~d_{M^{q}\otimes X }(e^{q}(x),e^q(y))<d(x,y).\]
 
Consider the two sequences $\left(\theta_q(x)\right)_{q\in\mathbb{N}}$ and $\left(\theta_q(y)\right)_{q\in\mathbb{N}}$. If $e^q(x)=m_{0}\otimes \cdots m_{q-1}\otimes {x_{q}}$ and $e^q(y)=n_{0}\otimes \cdots n_{q-1}\otimes {y_{q}}$, then $\displaystyle \theta_q(x)=  m_{0}\otimes \cdots m_{q-1}\otimes z$ and $\displaystyle \theta_q(y)= n_{0}\otimes \cdots n_{q-1}\otimes z$ for some $z\in \{T,L,R\}$. As in Proposition \ref{contOff}, we now have,
\begin{eqnarray*}
d_{S}(f(x),f(y)) & \leq  \dfrac{\epsilon}{3}~~  + & \frac{1}{2^{q}}+d_{M^{q}\otimes X}(e^{q}(x),e^{q}(y))+\frac{1}{2^{q}}\\ 
 & <  \dfrac{\epsilon}{3} ~~  + &\frac{\epsilon}{3}~~+~~d(x,y)~~+~~\frac{\epsilon}{3}~~=~~\epsilon~~+~~d(x,y).
\end{eqnarray*}

Since $\epsilon$ is arbitrary, $d_{S}(f(x),f(y))\leq d(x,y)$. Hence, $f$ is a short map. 	 
\end{proof}

\begin{theorem}[Theorem 16 of \cite{Bhat}]
$(S, s)$ is the final coalgebra for $F$ on  $\ms$.
\label{theorem-mshort}
\end{theorem}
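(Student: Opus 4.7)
The plan is to leverage the already-established final coalgebra result for $\mc$ together with Proposition~\ref{shortOff}, using the fact that $\ms$ is a (non-full) subcategory of $\mc$. Since every short map is continuous, any coalgebra $(X,e)$ in $\ms$ is automatically a coalgebra in $\mc$. Moreover, $(S,s)$ itself is a coalgebra in $\ms$, because $s$ was constructed as the image under the Cauchy completion functor $C\colon\ms\to\ms$ of the isometric isomorphism $g^{-1}$, and $C$ preserves short maps (indeed isometries).

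First I would observe that the coalgebra structure $(S,s)$ lives in $\ms$. Next, given an arbitrary $\ms$-coalgebra $(X,e)$, I would view it as a $\mc$-coalgebra and apply the theorem from Section~\ref{section-final-coalgebra-mc} to obtain the unique $\mc$-coalgebra morphism $f\colon X\to S$, defined by $f(x)=\lim_{n\to\infty}\theta_n(x)$. The key step is then to invoke Proposition~\ref{shortOff}: since $e$ is short, this same $f$ is in fact a short map. Combined with the fact that $f$ preserves the distinguished points (already verified in the $\mc$ argument preceding Lemma~\ref{defOfMediatingMor}), this shows $f$ is a morphism in $\ms$.

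For uniqueness, I would note that any $\ms$-coalgebra morphism $h\colon(X,e)\to(S,s)$ is in particular a $\mc$-coalgebra morphism, so uniqueness in $\mc$ (equivalently, already in $\set$, via Lemma~\ref{lemma-mustbe}) forces $h=f$. This completes the verification of the universal property.

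I do not expect any serious obstacle here: all the substantive work has been carried out in the preceding subsections. The one point that requires a moment of care is confirming that the formula $f(x)=\lim_{n\to\infty}\theta_n(x)$ defining the mediating morphism in $\mc$ coincides with whatever mediating morphism one would construct directly in $\ms$; but because the construction of $f$ depends only on iterating $e$ and taking limits in $S$, and because short maps are continuous so that Lemma~\ref{cauchySeq}, Lemma~\ref{indpend}, and Lemma~\ref{defOfMediatingMor} all apply verbatim, this is automatic. The proof is then essentially a bookkeeping argument tying together the $\mc$ result with Proposition~\ref{shortOff}.
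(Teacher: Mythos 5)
Your proposal is correct and follows exactly the paper's own route: the theorem is obtained by combining the mediating morphism $f(x)=\lim_{n\to\infty}\theta_n(x)$ from the $\mc$ development with Proposition~\ref{shortOff} (which upgrades $f$ to a short map when $e$ is short), with uniqueness inherited from the $\set$-level argument of Lemma~\ref{lemma-mustbe}. Nothing further is needed.
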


However,  the unique coalgebra map from $(S, s)$ to $(\s, \sigma)$ is not short (only Lipschitz). Hence it follows that $(\s, \sigma)$ is not the final coalgebra for $F$ on $\ms$.

\section{Tripointed Metric Spaces with Lipschitz Maps}
\label{section-ml}

In this final section we turn our attention to $\ml$, the category of tripointed metric spaces with Lipschitz maps which preserve the $3$ designated points
$T$, $L$, and $R$.

In some ways, this is the most interesting case.  In $\mc$, we were able to obtain the Sierpinski gasket itself as the final coalgebra by showing that the final coalgebra $S$ in $\ms$ is a final coalgebra in $\mc$, and using the fact that $\s$ and $S$ are isomorphic in $\mc$.  In fact, they would be isomorphic in $\ml$, since they are bilipschitz equivalent, so the intention was to obtain $S$ as a final coalgebra in $\ml$.  

However, we find that not only is $S$ not a final coalgebra in this category, but there is no final $F$-coalgebra, or even weakly final $F$-coalgebra in this setting. 

\rem{
\begin{itemize}
    \item $G_\rho$ is the initial algebra again
    \item The case for $\ml$: in $\ms$ we don't get the Sierpinski Gasket, just bilipschitz equivalence to it for the final coalgebra.  So if we could get the same final coalgebra here, we would get the Sierpinski gasket.
    \item But this is even worse than $\mc$, we have no final coalgebra at all. 
\end{itemize}
}
\subsection{Initial Algebra}  

All of the work in Section~\ref{section-initial-algebra-continuous} goes
through.  We need only change
``continuous'' to ``Lipschitz'' throughout,
and thus change $\mc$ to $\ml$.
Again, what makes this work is the observation that 
finite spaces are bilipschitz isomorphic to 
discrete spaces.  Moreover,  
there is an adjunction
$D\dashv U$ between sets and metric spaces with Lipschitz maps.  

In this way, we see that the initial algebra
of $F$ on $\ml$ is the initial algebra in $\set$
endowed with the discrete metric.

\rem{
As is the case in $\mc$, we find that $(G_\rho,g)$ is the initial algebra in $\ml$. Again, the quotient metric in $FG_\rho$ only takes on values $\{0,\frac{1}{2},1\}$, so for $x,y\in FG$, either $x=y$, so $d_{G_\rho}(f(x),f(y)) = 0 \leq 2 d_{FG_\rho}(x,y)$, or $x\neq y$, so $d_{FG_\rho}(x,y) \in \{\frac{1}{2},1\}$, which means $d_{G_\rho}(f(x),f(y)) \leq 1 \leq 2 d_{FG_\rho}(x,y)$.  So the $\set$ morphism $g:FG_\rho\rightarrow G_\rho$ is Lipschitz.   So this is an algebra in $\ml$. 

The following Proposition and Corollary are very similar to Proposition \ref{Grhoinitialalgebra} and Corollary \ref{Gnotinitial}.\footnote{LM: I would drop the proofs, since they really are the same as the earlier ones.}

\begin{proposition} $(G_\rho, g)$ is the initial algebra in $\ml$. \end{proposition}
\begin{proof}
Given an algebra $(I, e:FI\rightarrow I)$ in $\ml$, we can view this as a set algebra, so we necessarily get a unique $\set$ morphism $f:G\rightarrow I$ such that $e\circ Ff= f\circ g$.  

Then, for the map $f:G_\rho\rightarrow I$, $d_I(f(x),f(y)) =0 = d_{G_\rho}(x,y)$ if $x=y$, and $d_I(f(x),f(y)) \leq 1 = d_{G_\rho}(x,y)$ if $x\neq y$.  So $f$ is a Lipschitz map.  Thus, this is a morphism in $\ml$.  

For uniqueness, consider a morphism $f':G_\rho\rightarrow I$ in $\ml$ such that $e\circ Ff' = f'\circ g$.  If we forget the metric, this is also a morphism in $\set$, so we must have $f=f'$.  

\end{proof}

\begin{corollary} $(G,g)$ with the canonical (colimit) metirc is not the initial algebra in $\ml$.\end{corollary}
\begin{proof}
If $(G,g)$ with the canonical metric was also an initial algebra, there would be a $\ml$ isomorphism between $f:G\rightarrow G_\rho$ which is a bilipschitz map.  However, since we can find $x,y\in G$ such that $d_G(x,y)$ is arbitrarily small, it is impossible to find $K\geq 1$ such that $d_{G_\rho}(f(x),f(y)) = 1 \leq Kd_G(x,y)$ for all $x,y\in G$.  
\end{proof}

As in $\mc$, we find that $G$ is only the initial $F$-algebra in $\ml$ via the discrete metric, so again, we do not obtain any metric information about this object in this setting. 
}

\subsection{No Final Coalgebra} 

It turns out that there is no final $F$-coalgebra in $\ml$.  
Our proof employs the following coalgebra whose structure is reminiscent 
of the Cantor staircase function.
The carrier is 
 the tripointed space $$C = \{(x,0):x\in [0,1]\}\cup (\frac{1}{2},\frac{\sqrt{3}}{2})\}$$ whose distinguished elements are $T_{C} =(\frac{1}{2}, \frac{\sqrt{3}}{2})$, $L_C = (0,0)$, $R_C = (1,0)$ and whose metric is the Euclidean metric on $\mathbb{R}^2$. 

We need a family of coalgebra structure maps, one for each number $j\geq 4$.
Fix some such $j$.
Let $e:C\rightarrow F C$ be given by 
\[ e(x,y)= \begin{cases} 
      a\otimes T_{C} & y\neq 0 \\
      b\otimes L_{C} & y =0, x\in [0,\frac{1}{j}]\\
      b\otimes (g(x),0) & y=0, x\in [\frac{1}{j},\frac{2}{j}]\\
      b\otimes  R_{C} = c\otimes  L_{C} & y= 0, x\in [\frac{2}{j},1-\frac{2}{j}]\\
      c\otimes (h(x),0) & y = 0, x\in [1-\frac{2}{j},1-\frac{1}{j}]\\
      c\otimes R_{C} & y=0, x\in [1-\frac{1}{j},1]
   \end{cases}
\]
where $g(x) = jx-1$ and $h(x) = jx - (j-2)$. 
The graph of $e$ is pictured in
Figure~\ref{depiction}.

\begin{figure}[t]
\[
\begin{tikzpicture}
% Definitions
\tikzmath{
\r1 = 0;
\r2 = 1;
\r3 = 10;
\q1 = 2;
\q2 = 4;
\q3 = 16;
\q4 = 18;
\q5 = 20;
\x1 = \q1; \y1 =0;
%\y1 = \r1*\q1;
\x2 = \q2; \y2=8;
%\y2 =\r2 * \q2;
\x3 = \q3; \y3=10;
%\y3 = \r3 * \q3;
\x4 = \q4;  \y4=16;
\x5 = \q5; \y5=18;
 } 
% Axis
\begin{axis}[
axis x line=middle,
axis y line=middle,
%ylabel=Total Price,
%xlabel=Quantity,
xtick={0,\x1,\x2,\x3,\x4,\x5},
xticklabels={0,$\frac{1}{j}$, $\frac{2}{j}$, $\frac{j-2}{j}$, $\frac{j-1}{j}$,1},
xlabel near ticks,
ytick={.2,10,20},
yticklabels={$b\otimes{(0,0)}$,$b\otimes R_{C}=c\otimes L_{C}$,$c\otimes{(1,0)}$},
ylabel near ticks,
xmax=20,
ymax=20,
xmin=0,
ymin=0
]
% Plots
\addplot[domain=0:\q1] {0};
\addplot[domain=\q1:\q2] {5*x-10};
\addplot[domain=\q2:\q3] {10};
\addplot[domain=\q3:\q4] {5*x-70};
\addplot[domain=\q4:\q5] {20};
\addplot[only marks,mark=*] coordinates{(0,0)(\q1,0)(\q2,10)(\q3,10)(\q4,20)(\q5,20)};
\end{axis}
\end{tikzpicture}
\]
\caption{This depicts the graph of $e$,
or rather its restriction to $A =\{(x,0):x\in [0,1]\}$.  On the $y$-axis, we show
$\{b,c\}\otimes A$.
On the $x$-axis, $j = 10$.
\label{depiction}}
\end{figure}

By a straightforward calculation, we can verify that this map is well-defined, and since $e$ is piecewise linear, it is easy to see that it is indeed Lipschitz with constant $\frac{j}{2}$.

Now we will use this to show that there is no final $F$-coalgebra in $\ml$.  The idea is that, given a coalgebra $(U,\mu:U\rightarrow FU)$, we can choose $j$ in the Cantor-like coalgebra based on the Lipschitz constant of $\mu$ such that it is impossible to find the required morphism $C\rightarrow U$, since its Lipschitz constant would have to be arbitrarily large.  So we are not only showing that there is no final coalgebra, but there is not even a weakly final coalgebra in this setting.  

\begin{proposition} There is no final $F$-coalgebra in $\ml$.\end{proposition}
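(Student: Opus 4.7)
The plan is to derive a contradiction from the hypothesis that some $F$-coalgebra $(U,\mu)$ in $\ml$ admits a coalgebra morphism from every $(C,e_j)$. Because the coalgebra map $e=e_j$ stretches distances by roughly $j/2$ while $\mu$ is Lipschitz of some fixed constant $K_\mu$, iterating the coalgebra square $n$ times and comparing distances will force any such morphism $f$ to be Lipschitz of constant at least $(j/(2K_\mu))^n$. Choosing $j$ large then gives the contradiction, and since this can be done for every candidate $(U,\mu)$, no weakly final coalgebra can exist.

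Concretely, I would suppose $(U,\mu)$ and a coalgebra morphism $f:(C,e_j)\to(U,\mu)$ exist, with Lipschitz constants $K_\mu$ and $K_f$, and fix an integer $j\geq 4$ with $j>2K_\mu$. For each $n\geq 1$, define $x_1^{(n)}=(t_1^{(n)},0)$ and $x_2^{(n)}=(t_2^{(n)},0)$ in $C$ by the recursion $t_1^{(1)}=1/j$, $t_2^{(1)}=2/j$, and $t_i^{(n+1)}=(t_i^{(n)}+1)/j$. Since $t\mapsto(t+1)/j$ sends $[1/j,2/j]$ into itself whenever $j\geq 2$, each $t_i^{(n)}$ lies in $[1/j,2/j]$, so $e_j$ acts on $x_i^{(n+1)}$ by its middle linear branch $g(t)=jt-1$, giving $e_j(x_i^{(n+1)})=b\otimes x_i^{(n)}$. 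Combining this with the base cases $e_j(x_1^{(1)})=b\otimes L_C$ and $e_j(x_2^{(1)})=b\otimes R_C$ (the latter uses the identification $b\otimes R_C=c\otimes L_C$ to single out the $b$-representative at the boundary), one obtains by induction $e_j^n(x_1^{(n)})=b\otimes\cdots\otimes b\otimes L_C$ and $e_j^n(x_2^{(n)})=b\otimes\cdots\otimes b\otimes R_C$, with $n$ copies of $b$ in each. Moreover $d_C(x_1^{(n)},x_2^{(n)})=|t_2^{(n)}-t_1^{(n)}|=j^{-n}$ by direct computation of the recursion.

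From the coalgebra square, $\mu^n\circ f=F^n f\circ e_j^n$. Applying both sides at $x_i^{(n)}$ and using $f(L_C)=L_U$, $f(R_C)=R_U$ gives $\mu^n(f(x_1^{(n)}))=b\otimes\cdots\otimes b\otimes L_U$ and $\mu^n(f(x_2^{(n)}))=b\otimes\cdots\otimes b\otimes R_U$. By Lemma~\ref{LemmaMetricOnTensor} these lie at distance exactly $2^{-n}$ in $F^n U$, and by iterating Lemma~\ref{presFunctions}(ii) the composite $\mu^n$ is Lipschitz with constant at most $K_\mu^n$. Hence $d_U(f(x_1^{(n)}),f(x_2^{(n)}))\geq(2K_\mu)^{-n}$. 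On the other hand, the Lipschitz bound gives $d_U(f(x_1^{(n)}),f(x_2^{(n)}))\leq K_f\cdot j^{-n}$. Together these force $K_f\geq (j/(2K_\mu))^n$ for every $n$, which is impossible because $j/(2K_\mu)>1$.

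The main point to verify carefully is that the inductively defined $x_i^{(n)}$ stay inside the middle linear piece $[1/j,2/j]\times\{0\}$ of $e_j$, so that each iteration of $e_j$ prepends a new tensor factor of $b$; this reduces to the self-map property of $t\mapsto(t+1)/j$ noted above. The rest of the argument is a mechanical combination of Lemma~\ref{presFunctions}(ii) (bounding the Lipschitz constant of $\mu^n$) with Lemma~\ref{LemmaMetricOnTensor} (computing the distance between the two target tensor expressions), both of which have already been established.
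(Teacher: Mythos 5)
Your proof is correct and follows essentially the same strategy as the paper's: the same Cantor-staircase coalgebras $(C,e_j)$, the same nested point pairs in $[\tfrac{1}{j},\tfrac{2}{j}]\times\{0\}$ whose distance shrinks like $j^{-n}$ while their images under $f$ can shrink no faster than $(2K)^{-n}$, and the same growth-rate contradiction. The one genuine difference is the direction of iteration. The paper invokes that the structure map of a final coalgebra is an isomorphism, sets $u=\mu^{-1}$, and runs an induction using the lower bilipschitz bound on $u$; you instead iterate the coalgebra square forward, $\mu^n\circ f=F^nf\circ e_j^n$, compute $F^nf(e_j^n(x_i^{(n)}))=b\otimes\cdots\otimes b\otimes L_U$ (resp.\ $R_U$) exactly, and divide by the Lipschitz constant $K_\mu^n$ of $\mu^n$. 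Your version never needs $\mu$ to be invertible, so it directly rules out \emph{weakly} final coalgebras --- a strengthening the paper asserts in its prose but whose written proof, as it relies on $\mu^{-1}$, only literally covers the final case. The cost is negligible: you need the standard fact (the paper's Lemma on preservation of Lipschitz maps, applied $n$ times) that $\mu^n$ has Lipschitz constant at most $K_\mu^n$, and the exact value $2^{-n}$ for the distance between $b^{\otimes n}\otimes L_U$ and $b^{\otimes n}\otimes R_U$, which follows by iterating the first clause of the metric formula for $M\otimes X$. Your verification that $t\mapsto (t+1)/j$ maps $[\tfrac{1}{j},\tfrac{2}{j}]$ into itself (so that every iterate lands in the middle linear branch) is exactly the point the paper handles via $g(x_{n+1})=x_n$, and your choice of any $j>2K_\mu$ in place of the paper's $j=4K$ is immaterial.
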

\begin{proof}

Suppose there is a final $F$-coalgebra $(U,\mu:U\rightarrow F U)$.  Then $\mu$ is a Lipschitz isomorphism, so it is bilipchitz.  Let $u = \mu^{-1}$ and let $K\geq 1$ be such that $\frac{1}{K}d_{F U}(x,y) \leq d_U(u(x),u(y)) \leq K d_{F U}(x,y)$.

Let $(C,e:C\rightarrow F C)$ be the Cantor-like coalgebra described above with $j = 4K$ (so indeed, $j\geq 4$).  

For $n\geq 1$, let 
\[
\begin{array}{lcl}
x_n & = & (\frac{1}{j} + \ldots + (\frac{1}{j})^n,0)\\
y_n & = &  x_n + ((\frac{1}{j})^n,0)
\end{array}
\]
Then $d_C (x_n,y_n) = (\frac{1}{j})^n$, and observe that $x_n,y_n\in [\frac{1}{j},\frac{2}{j}]\times\{0\}$ 
for all $n\geq 1$. Further observe that $g(x_{n+1}) = x_{n}$ and $g(y_{n+1}) = y_{n}$ for all $n\geq 1$.  

Since $(U,\mu)$ is a final coalgebra, there is a unique 
$\ml$ morphism $f:C\rightarrow U$ such that $f = u\circ (F f)\circ e$.  Suppose $L>0$ is the Lipschitz constant for $f$.

We  show by induction on $n\geq 1$ that 
\begin{equation}
\label{showme}
d_U(f(x_n),f(y_n)) \geq \frac{1}{(2K)^n}
%= u(b\otimes\ldots\otimes u(b\otimes f (L_{C}))) = L_U,
\end{equation}
Here is the verification for $n = 1$.

\[ f(x_1) = f((\frac{1}{j},0)) = u\circ (F f)\circ e((\frac{1}{j},0)) = u(b\otimes f(L_{C}))
= u(b\otimes L_U) .\]
Similarly,
$f(y_1) = u(b\otimes f(R_{C}))= u(b\otimes R_U)$.
So $d_U(f(x_1), f(y_1))$ is 
\[
 d_U(u(b\otimes L_U), u(b\otimes R_U)) \geq \frac{1}{K}d_{FU}(b\otimes L_U,b\otimes R_U) = \frac{1}{2K} d_U(L_U,R_U) = \frac{1}{2K}.
\]
We used Lemma~\ref{LemmaMetricOnTensor}.
This verifies (\ref{showme}) for $n=1$.
Assume (\ref{showme}) for some fixed $n$.
Observe first that 
\[
\begin{array}{lcll}
f(x_{n+1}) & = & (u\o Ff \o e)(x_{n+1}) &\mbox{since $f$ is a coalgebra morphism} \\
& = & (u\o Ff)(b\otimes g(x_{n+1})) & \mbox{by definition of $e$} \\
& = & (u\o Ff)(b\otimes x_{n}) & \mbox{since $g(x_{n+1})= x_n$} \\
& = & u(b\otimes f(x_n)) & \mbox{by functoriality of $F$} \\
%& = & u(b\otimes L_U) &\mbox{by induction hypothesis} \\
%& = & u(L_{F U}) &\mbox{since $F$ is a functor}  \\
%& = & L_{U} &\mbox{since $F$ is a morphism}  \\
\end{array}
\]
Similarly, $f(y_{n+1}) = u(b\otimes f(x_n))$.
Then using the induction hypothesis and the same reasoning as above, 
\[
\begin{array}{lcl}
d_U(f(x_{n+1}),f(y_{n+1})) & \geq & \frac{1}{K} d_{FU}(b\otimes f(x_n), b\otimes f(y_n)) \\
& = & \frac{1}{K} \frac{1}{2}d_U(f(x_n), f(y_n)) \\
& \geq & \frac{1}{2K} \cdot \frac{1}{(2K)^n} 
\end{array}
\]
Our induction step is complete.

\rem{
We  show by induction on $n\geq 1$ that 
\begin{equation}
\label{showme}
f(x_n) = u(b\otimes\ldots\otimes u(b\otimes f (L_{C}))) = L_U,
\end{equation}
where $b$ appears $n$ many times.
Note that for all $n$, this is equal to $L_U$.  This is shown by an easy
induction using  the fact that $f$ and $u$
preserve the tripointed structure: $f(0,0) = L_U$, and 
thus $b\otimes L_U = L_{F U}$. 
Further, $u(L_{FU}) = L_U$.  \footnote{LM: Throughout this page, why do we write $(0,0)$ and $(1,0)$
when $L_{C}$ and $R_{C}$ would be easier to read,
especially if we dropped  the subscripts?
I changed one of the uses near the bottom, and it looks better to me.

\textcolor{violet}{TN: Agreed, I'll go through and change these.}}
For $n=1$, (\ref{showme}) is
\[ f(x_1) = f((\frac{1}{j},0)) = u\circ (F f)\circ e((\frac{1}{j},0)) = u(b\otimes f(L_{C}))
= u(b\otimes L_U) = L_{C}.\]  
Assuming  (\ref{showme}) holds for $n$, 
we have it for $n+1$:
\[f(x_{n+1}) = u\circ (F f) \circ e(x_{n+1}) = u(b\otimes f(x_n))
= u(b\otimes L_U) = L_{C}.\]
}
\rem{
A similar induction shows that $f(y_n) = R_U$.
The verification is the same, except that it uses
$e((\frac{2}{j},0)) = f(R_{C})$ in the base case.
\rem{
Next, we will show that $f(y_n) = u(b\otimes u(b\otimes \ldots u(b\otimes f(1,0))))$ where $b$ appears $n$-many times, for all $n\geq 0$.  For $n = 1$:
\[ f(y_1) = u\circ (F f)\circ e((\frac{2}{j},0)) = u(b\otimes f((1,0))).\]
Assume the claim holds for $y_n$.
Then
\[f(y_{n+1}) = u\circ (F f)\circ e((y_{n+1},0)) = u(b\otimes f(y_n)).
\]
By the induction hypothesis, this is equal to $u(b\otimes u(b\otimes\ldots \otimes u(b\otimes f((1,0)))))$, as required.
}
Continuing,
\[\begin{array}{rcl}
d_U(f(x_n),f(y_n)) & = & d_U (u(b\otimes  \ldots u(b\otimes f((0,0)))),u(b\otimes \ldots u(b\otimes f((1,0)))))\\
& \geq & \frac{1}{K} d_{F U} (b\otimes  \ldots u(b\otimes f((0,0))),b\otimes \ldots u(b\otimes f((1,0))))\\
& = & \frac{1}{K} \cdot \frac{1}{2} d_U(u(b\otimes \ldots u(b\otimes f((0,0)))),u(b\otimes \ldots u(b\otimes f((1,1)))))\\
& \vdots & \\
& = & \frac{1}{(2K)^n} d_U(f((0,0)),f((1,0)))\\
& = & \frac{1}{(2K)^n} \\
\end{array}
\]

Since $u$ is has Lipschitz constant $K$, distances in the same copy indexed by $b$ are scaled by $\frac{1}{2}$, and $f((0,0))= L_U$ and $f((1,0)) = R_U$ because $f$ is a morphism, so their distance is $1$. 

}

We have established (\ref{showme}).  
Thus, we have \[
\frac{1}{(2K)^n} \leq d_U(f(x_n),f(y_n)) \leq Ld_{C}(x_n,y_n) = \frac{L}{j^n}.\]
Hence $\frac{j^n}{(2K)^n} \leq L$.
Recall that $j = 4K$.   Thus $2^n\leq L$.
Since this holds for all $n$, no such Lipschitz map $f$ can exist. 
Therefore $U$ is not a final $F-$coalgebra.
\end{proof}

\begin{corollary} $\SG$ is not a final coalgebra in $\ml$. \end{corollary}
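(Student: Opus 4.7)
The plan is to observe that this corollary follows almost immediately from the preceding proposition, which established that no final $F$-coalgebra exists in $\ml$ at all. Since $(\SG,\sigma)$ is a particular $F$-coalgebra in $\ml$ (recall that $\sigma$ is bilipschitz, being the inverse of the affine map $\tau$ assembled from the three contractions $\sigma_a,\sigma_b,\sigma_c$, each of which is Lipschitz with constant $1/2$, so $\sigma$ itself is Lipschitz), it cannot be final — if it were, it would contradict the non-existence of a final coalgebra.

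More precisely, my proof would just be: by the previous proposition, $\ml$ admits no final $F$-coalgebra. In particular, the $F$-coalgebra $(\SG,\sigma)$ — whose underlying Lipschitz structure we have already recorded when discussing the bilipschitz equivalence between $S$ and $\SG$ in the $\mc$ section — fails to be final.

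There is no real obstacle here; the corollary is a one-line consequence of the proposition. If one wanted to be slightly more explicit, one could instantiate the proof of the proposition with $U = \SG$ and $\mu = \sigma$, noting that $\sigma$ is a Lipschitz bijection whose inverse $\tau$ is also Lipschitz (each $\sigma_m$ is an affine contraction, hence bilipschitz on its image), so the argument with the Cantor-like coalgebra $(C,e)$ applies directly to produce a contradiction for any candidate coalgebra morphism $C \to \SG$.
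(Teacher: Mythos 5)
Your proposal is correct and matches the paper's (implicit) argument exactly: the corollary is an immediate consequence of the preceding proposition that no final $F$-coalgebra exists in $\ml$, given that $(\SG,\sigma)$ is a coalgebra in $\ml$ (the paper records in Example~2.3 that $\tau$ is invertible in $\ml$). No further detail is needed.
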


\section{Conclusion}

The most important sections of this paper are Sections~\ref{section-mc} and~\ref{section-ml},
and a good way to understand those sections is as a search for the ``right'' categorical context
for our overall project.   To expand on this, let us go back to Freyd's characterization result
for the unit interval.  We mentioned the category $\BiP$ of bipointed sets 
and the relevant functor $K\colon\BiP\to\BiP$
in Example~\ref{ex-bipointed}.   Freyd introduced these to characterize 
the unit interval.  Here is a sketch of the proof, taken from~\cite{Bhat}.
Work in $\BiP$.
Let $U$ be the unit interval, and let $\upsilon\colon U \to KU$ be the coalgebra in 
Example~\ref{ex-bipointed}.
Suppose that we are given an arbitrary coalgebra $(A,\alpha\colon A\to KA)$.
We want to find a unique $h$ so that the diagram below commutes:
\begin{equation}
\label{eq-freyd}
\begin{tikzcd}
A  \arrow{r}{\alpha} \arrow{d}[swap]{h}&  KA \arrow{d}{Kh}\\
U & \arrow{l}{\upsilon^{-1}}   KU
\end{tikzcd} 
\end{equation}
It is easy to check that $\upsilon$ is a bijective isometry, so it is invertible.
Now $U$ is a complete metric space, and thus so is the function set $U^A$.
The subspace of functions which are morphisms in $\BiP$ is a closed subset (easily);
let us call this set $C$.
$C$  a complete metric space.   There is an endofunction on $C$
suggested by our diagram, namely $h\mapsto \upsilon^{-1} \o Kh \o \alpha$.
It is easy to check that this map is a contracting map.  So by the Contraction Mapping Theorem,
it has a unique fixed point.  Fixed points of our endofunction are exactly coalgebra morphisms
$(A,\alpha)\to (U,\upsilon)$.   And so we are done.

There are two noteworthy features of this proof.  First, even to prove a result about sets,
it was convenient to move to the setting of metric space, or even complete metric spaces.
This particular issue is central to this paper, because behind our discussion of
$\ms$, $\mc$, and $\ml$ lies the question of where we should be working to get final coalgebra results.
Second, the proof as we presented it is incomplete.   We ``used'' the Contracting Mapping Theorem
the way everyone thinks of it, but really one should state it correctly as 
``every contracting mapping of a \emph{non-empty} complete metric space has a unique fixed point.''
It is not immediate that $C \neq\emptyset$. However, for every bipointed set $A$, there is a 
morphism into $U$, namely $x\mapsto d(x,\bot)$.  So in this case, we are saved.

One would like to prove all of our results the same way: change the category in 
(\ref{eq-freyd}) from $\BiP$ to $\set$, and change the unit interval to the Sierpinski gasket $\s$.
This is a complete metric space.   We saw the algebra structure 
$\tau \colon F \s\to \s$ in Example~\ref{Sierpinski2}. 
So we replace $\upsilon$ in (\ref{eq-freyd})
with $\tau$.
%This already entails working in $\mc$ or $\ml$ and not in $\ms$.
However it is not at all obvious that every tripointed set $A$
has a continuous map $f\colon A\to\s$ preserving $T$, $L$, and $R$.  Indeed, showing this seems
as hard as proving the final coalgebra result.  Turning our argument around, the application
of the Contraction Mapping Theorem shows that finding \emph{any} such function $f$ 
is tantamount
to showing that there is a \emph{unique} one.  
(We could have proved Lemma~\ref{lemma-mustbe} using this type of argument.)
In a very real way, all of our work in Section~\ref{section-mc}
was devoted to just this task.  We feel that it would be difficult to find some $f$ directly,
mostly because the metric on $\s$ has features of Euclidean space that are not 
required of any tripointed metric space.  So instead of $(\s,\sigma)$, we worked with
$(S,s)$, where $S$ is the completion of the initial algebra
$(G,g)$, and $s$ is obtained from $g^{-1}$ using the Cauchy completion functor.
This map $s$ is an isometry, and so the coalgebra $(S,s)$ lives in 
$\ms$ as well as $\mc$ and $\ml$.  From the point of view of fractals, $G$ is the 
\emph{finite address space} with its intrinsic metric (unrelated to $\mathbb{R}^2$),
and $S$ is its completion, the \emph{infinite address space}.
Returning to $(S,s)$, the space is complete and the morphism is invertible, being an isometry
and a bijection.

Still, to follow our plan for the final coalgebra result,
we need to see that every 
object $A$, or every object $A$ which has a coalgebra structure $\alpha\colon A \to FA$,
 has a morphism $f\colon A \to S$.  
We also would like to know whether $\alpha$ being short (or Lipschitz, or continuous)
could guarantee that $f$ have the same property.  So here again we have this 
issue of what setting is best for our work.   We found in Section~\ref{section-final-coalgebra-mc}
that one can do all of this in the continuous setting.  Then we refined the observation to 
show that it works in the short setting as well.   So from this it seems that $\mc$ is the 
right setting for all of our work.  
(But also of interest is the fact that we obtained $f$ as a limit of maps $\theta_n$
(see~\ref{eq-theta}), and those were not necessarily continuous.)
However, the initial algebra does not work out the way we would want:
the metric is discrete for it.  So we conclude that perhaps there is no best setting for what we are doing.
Perhaps one needs to have all of the morphism classes in mind in this kind of work.

Returning for a moment to the Lipschitz setting, we have shown that it does not work out:
there simply is no final coalgebra in $\ml$.   This is not terribly surprising, since given a space
$B$ and a set $X$, the set of Lipschitz maps $g\colon X\to B$ is not in general a closed subpace of
the space of all maps.  So we would not expect our discussion above to work out in this case, 
and again it does not do so. 

The main next steps in this line of work would be to generalize all of this work to other fractals,
and to apply the results here.   There has been some work on generalizing to the Sierpinski carpet
(see~\cite{NoquezMoss}).   The work there is technically more complicated.  For that, one is 
``gluing along segments'', not just at discrete points.
Again, the issue of ``overall setting'' is hard because there are several choices for the overall
category.  Even with a choice in mind, 
the morphisms on the initial algebra chain 
(\ref{initialChain}) are not isometric embeddings, so the initial algebra is harder to work with.
It also seems possible to generalize what we did to work with fractals obtained by arbitrary 
arrangements of finite discrete ``gluing points,'' and this would be a good next step.
Finally, with the Sierpinski gasket itself, it would be nice to know whether the final coalgebra
results themselves could be used to simplify proofs of known facts, or to suggest new areas of
investigation.

\bibliographystyle{apalike}
\bibliography{gasket}
\end{document}